\newtheorem{theorem}{Theorem}
\numberwithin{theorem}{section}
\newtheorem{lemma}[theorem]{Lemma}
\newtheorem{claim}[theorem]{Claim}
\theoremstyle{definition}
\newtheorem{definition}[theorem]{Definition}
\newtheorem{example}[theorem]{Example}
\newcommand{\tb}{\textbf}
\newcommand{\tn}{\textnormal}
\newcommand{\se}{\subseteq}
\newcommand{\til}{\widetilde}
\newcommand{\ol}{\overline}
\newcommand{\lam}{\lambda}
\newcommand{\mf}{\mathfrak}
\newcommand{\tcb}{\textcolor{blue}}
\newcommand{\p}{\ol{\til{\pi}}}
\renewcommand{\P}{\ol{\mf{P}}}
\newcommand{\thet}{\ol{\til{\theta}}}
\newcommand{\F}{\ol{\mf{F}}}
\newcommand{\del}{\til{\partial}}
\title{$K$-analogues of Hivert's divided difference operators}
\author{Laura Pierson \\ \href{mailto:lcpierson73@gmail.com}{lcpierson73@gmail.com}}
\begin{document}

\maketitle

\begin{abstract}
    Several families of polynomials of combinatorial and representation theoretic interest (notably the Schur polynomials $s_\lam$, Demazure characters $\mf{D}_a$, and Demazure atoms $\mf{A}_a$) can be defined in terms of divided difference operators. Hivert \cite{hivert2000hecke} defines ``fundamental analogues" of these divided difference operators, and Hivert and Hicks-Niese \cite{hicks2024quasisymmetric} show that the polynomials that arise from those fundamental operators in analogous ways to the three families of polynomials above are respectively the fundamental quasisymmetric functions $F_a$ from \cite{gessel1984multipartite}, the fundamental slides $\mf{F}_a$ from \cite{assaf2017schubert}, and the fundamental particles $\mf{P}_a$ from \cite{searles2020polynomial}. Lascoux \cite{lascoux2001transition} defines $K$-analogues of the divided difference operators, and Buciumas, Scrimshaw, and Weber \cite{buciumas2020colored}  show that the polynomials arising in corresponding ways from the $K$-theoretic divided difference operators are respectively the Grothendieck polynomials $\ol{s}_\lam$, the combinatorial Lascoux polynomials $\ol{\mf{D}}_a$ from \cite{monical2016set}, and the combinatorial Lascoux atoms $\ol{\mf{A}}_a$ from \cite{monical2016set}, as conjectured by Monical \cite{monical2016set}. We define $K$-analogues of Hivert's fundamental divided difference operators and show that the polynomials arising in the corresponding ways from our new operators are respectively the multifundamentals $\ol{F}_a$ from \cite{lam2007combinatorial}, the fundamental glides $\F_a$ from \cite{pechenik2019decompositions}, and the kaons $\ol{\mf{P}}_a$ from \cite{monical2021polynomials}.
\end{abstract}

\section{Introduction}

Several classic families of polynomials that are of combinatorial and representation theoretic interest can be defined in terms of the divided difference operators $$\partial_i := \frac{1-s_i}{x_i - x_{i+1}},$$ where $s_i$ is the involution on polynomials that swaps the variables $x_i$ and $x_{i+1}$. Specifically, let $\pi_i := \delta_i x_i$ and $\theta_i := \pi_i-1 = x_{i+1}\delta_i,$ and for a permutation $w$ and a reduced word $\sigma_{i_1}\dots\sigma_{i_\ell}$ for $w$ where $\sigma_i = (i \ i+1)$, define $\pi_w := \pi_{i_1}\dots \pi_{i_\ell}$ and $\theta_w := \theta_{i_1}\dots\theta_{i_\ell}.$ Then we have:

\begin{theorem}[Demazure \cite{demazure1974nouvelle}]\label{thm:classic}
    If $\lambda\vdash n$ is a partition, $w_0\in\mf{S}_n$ is the longest permutation, $a$ is a strong composition, $\tn{sort}(a)$ is the partition formed by rearranging the parts of $a$ in nondecreasing order, and $w_a$ is the minimal length permutation turning $a$ into $\tn{sort}(a)$, we have $$\pi_{w_0}(x^\lambda) = s_\lambda,\hspace{1cm}\pi_{w_a^{-1}}(x^{\tn{sort}(a)}) = \mf{D}_a,\hspace{1cm}\theta_{w_a^{-1}}(x^{\tn{sort}(a)}) = \mf{A}_a,$$ where $s_\lambda$ are the Schur polynomials, $\mf{D}_a$ are the Demazure characters from \cite{demazure1974nouvelle} (also called key polynomials and denoted $\kappa_a$), and $\mf{A}_a$ are the Demazure atoms from \cite{demazure1974nouvelle} (denoted $\mathcal{A}_a$ in \cite{hicks2024quasisymmetric}).
\end{theorem}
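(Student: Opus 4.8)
The plan is to prove the three identities separately, after first pinning down the algebra of the three families of operators; the real content lies in the Demazure character/atom identities.

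\textbf{Algebra of the operators.} I would first verify the quadratic relations $\partial_i^2 = 0$, $\pi_i^2 = \pi_i$, $\theta_i^2 = -\theta_i$ together with the braid and far-commutation relations $\partial_i\partial_{i+1}\partial_i = \partial_{i+1}\partial_i\partial_{i+1}$ and $\partial_i\partial_j = \partial_j\partial_i$ for $|i-j|\ge 2$ (and likewise for $\pi$ and $\theta$); the braid relations are a short direct computation from the Leibniz rule $\partial_i(fg) = (\partial_i f)\,g + (s_i f)(\partial_i g)$. Combined with Matsumoto's theorem this shows $\pi_w$ and $\theta_w$ do not depend on the chosen reduced word, so the statement is well posed. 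I would also record that $\partial_i f = 0 \iff \pi_i f = f \iff f$ is symmetric in $x_i$ and $x_{i+1}$, and that $\ell(s_i w)<\ell(w)$ forces $\pi_i\pi_w = \pi_w$; in particular $\pi_{w_0}(g)$ is always symmetric.

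\textbf{The Schur identity.} The cleanest route is to reduce $\pi_{w_0}$ to the alternating symmetrizer $\partial_{w_0}$. Writing $\pi_i = \partial_i x_i$ and pushing the monomial factors to the right past the divided differences via the Leibniz rule (by induction, e.g.\ using the length-additive factorization $w_0^{(n)} = w_0^{(n-1)}\cdot s_{n-1}s_{n-2}\cdots s_1$), one obtains the operator identity $\pi_{w_0}(f) = \partial_{w_0}\!\big(x^\delta f\big)$ with $\delta = (n-1, n-2, \dots, 1, 0)$. I then invoke the classical formula $\partial_{w_0}(g) = \big(\sum_{w\in\mf{S}_n}(-1)^{\ell(w)} w(g)\big)/a_\delta$, where $a_\delta = \prod_{i<j}(x_i - x_j)$, which gives $\pi_{w_0}(x^\lambda) = \partial_{w_0}(x^{\lambda+\delta}) = a_{\lambda+\delta}/a_\delta = s_\lambda$ --- precisely Jacobi's bialternant definition of the Schur polynomial. (Alternatively: $\pi_{w_0}(x^\lambda)$ is symmetric, has the expected dominant monomial $x^\lambda$, and a short argument pins it down among symmetric functions.)

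\textbf{The Demazure character and atom identities.} Here I would take as input the combinatorial description of $\mf{D}_a$ and $\mf{A}_a$ as generating functions over semistandard Young tableaux cut out by the Lascoux--Sch\"utzenberger right-key condition (equivalently, via the Demazure crystal, or via Kohnert diagrams / skyline fillings), and show the operators reproduce these generating functions. Fixing a reduced word $\sigma_{i_1}\cdots\sigma_{i_\ell}$ for $w_a^{-1}$ and inducting along it, one starts from $x^{\tn{sort}(a)}$ --- the generating function of the unique tableau of that shape and content, whose right key is already sorted --- and checks that each application of $\pi_{i_j}$ adjoins exactly the tableaux whose right key acquires the newly-allowed relation between its $i_j$-th and $(i_j{+}1)$-th entries, i.e.\ that $\pi_i$ implements the Demazure crystal operator in direction $i$ at the level of characters; after all $\ell$ steps the result is the generating function defining $\mf{D}_a$. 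For the atoms one runs the same induction with $\theta_i = \pi_i - 1$ in place of $\pi_i$: since $\mf{D}_a = \sum_b \mf{A}_b$ over the relevant $b$ and subtracting $1$ discards the ``lower'' contribution at each step, an inclusion--exclusion along the word isolates the single atom $\mf{A}_a$.

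\textbf{Main obstacle.} The substantive work is entirely in the last step: matching the operator recursion to the combinatorial (right-key / crystal) definition of Demazure characters and atoms is the heart of Demazure's theorem and of Lascoux--Sch\"utzenberger's theory, and it demands careful bookkeeping of conventions --- in particular the direction in which compositions are sorted and the precise passage between $w_a$ and $w_a^{-1}$. By comparison, the operator algebra and the Schur case are routine once the right identities are assembled.
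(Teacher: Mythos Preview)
The paper does not prove this statement: Theorem~\ref{thm:classic} is quoted as a classical result due to Demazure (with the attribution in the theorem header and the sentence ``The polynomials in Theorem~\ref{thm:classic} have combinatorial tableau definitions which we will not repeat here''), and no argument for it appears anywhere in the text. The paper's own proofs in \S\ref{sec:pf} are for Theorem~\ref{thm:main}, the new $K$-fundamental analogue. So there is nothing in the paper to compare your proposal against.

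That said, your outline is a reasonable sketch of the standard approach to the classical result, with the usual caveats: the identity $\pi_{w_0}(f)=\partial_{w_0}(x^\delta f)$ requires some care (the Leibniz shuffle is not entirely mechanical), and the inductive matching of $\pi_i$ to the Demazure crystal operator is, as you acknowledge, the genuine content of the theorem and would need to be spelled out against a fixed combinatorial definition of $\mf{D}_a$ and $\mf{A}_a$. But none of this is in the paper, so the comparison you were asked for is vacuous here.
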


The polynomials in Theorem \ref{thm:classic} have combinatorial tableau definitions which we will not repeat here because we will not need them, and they also have representation theoretic interpretations given in \cite{demazure1974nouvelle}. 

For $w\in \mf{S}_n$, the Schubert polynomials $\mf{S}_w$ also have a definition $\mf{S}_w = \partial_{w^{-1}w_0}(x_1^{n-2}x_2^{n-2}\dots x_{n-1}^1)$ in terms of divided difference operators. Schubert polynomials (and Schur polynomials, which are a special case) also have a geometric meaning related to the cohomology rings of flag varieties. Varieties also have a different ring structure associated to them called the $K$-ring, generated by line bundles, with addition and multiplication defined in terms of direct sums and tensor products. The $K$-ring depends on an additional parameter $\beta$ and specializes to the cohomology ring when $\beta = 0$. The $K$-rings of flag varieties give rise to the Grothendieck polynomials $\ol{\mf{S}}_w$, which are thus the $K$-analogue of the Schubert polynomials. In the Schur polynomial case, these $K$-analogues $\ol{s}_\lam$ have a nice combinatorial description in terms of set-valued Young tableaux that lifts the tableau definition of Schur polynomials. Motivated by that, a number of authors have defined $K$-analogues of various combinatorially defined polynomials by replacing the relevant combinatorial objects with corresponding ``set-valued" objects, in the hopes that the original polynomials and their $K$-analogues may have some geometric meaning. 

In that vein, Lascoux \cite{lascoux2001transition} defines $K$-analogues $\ol{\pi}$ and $\ol{\theta}$ of the above divided difference operators, and Lascoux \cite{lascoux2001transition} and Buciumas, Scrimsaw, and Weber \cite{buciumas2020colored} prove a $K$-analogue of Theorem \ref{thm:classic}:

\begin{theorem}[Lascoux \cite{lascoux2001transition}; Buciumas-Scrimshaw-Weber \cite{buciumas2020colored}]\label{thm:K}
    With $\lam$, $a$, $\tn{sort}(a)$, and $w_a$ as in Theorem \ref{thm:classic}, $$\ol{\pi}_{w_0}(x^\lambda) = \ol{s}_\lambda,\hspace{1cm}\ol{\pi}_{w_a^{-1}}(x^{\tn{sort}(a)}) = \ol{\mf{D}}_a,\hspace{1cm}\ol{\theta}_{w_a^{-1}}(x^{\tn{sort}(a)}) = \ol{\mf{A}}_a,$$ where $\ol{\mf{D}}_a$ are the combinatorial Lascoux polynomials from \cite{monical2016set}, and $\ol{\mf{A}}_a$ are the combinatorial Lascoux atoms from \cite{monical2016set}.
\end{theorem}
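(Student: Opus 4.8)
The plan is to deduce all three identities from the set-valued tableau models by an induction on the length $\ell(w_a)$, once the basic algebra of Lascoux's operators is in place; the real content will be a single one-step recursion at the level of set-valued skyline fillings. \emph{Step 1: algebra of $\ol{\pi}_i$ and $\ol{\theta}_i$.} I would first record that $\ol{\pi}_i$ and $\ol{\theta}_i$ preserve $\mb{Z}[\beta][x_1,\dots,x_n]$, satisfy the braid relations (and commute for $|i-j|\ge 2$) — so that $\ol{\pi}_w$ and $\ol{\theta}_w$ are well defined independently of the chosen reduced word, and $\ol{\pi}_{uv}=\ol{\pi}_u\ol{\pi}_v$ and $\ol{\theta}_{uv}=\ol{\theta}_u\ol{\theta}_v$ whenever $\ell(uv)=\ell(u)+\ell(v)$ — and that each also satisfies a quadratic relation. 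These are finite computations on monomials; only the braid relations and the length-multiplicativity are used in what follows.

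\emph{Step 2: reduction to a one-step recursion.} If $w_a=e$, then $a=\tn{sort}(a)$ and all three right-hand sides collapse to $x^a$, matching $\ol{\pi}_e(x^a)=\ol{\theta}_e(x^a)=x^a$; combinatorially this is the easy fact that a composition equal to its own sort has a single set-valued skyline filling, of weight $x^a$ with no $\beta$'s. If $w_a\ne e$, choose a simple reflection $s_i$ with $\tn{sort}(s_i a)=\tn{sort}(a)$ and $\ell(w_{s_i a})=\ell(w_a)-1$; then $w_a^{-1}=s_i\,w_{s_i a}^{-1}$ with lengths adding, so Step 1 gives $\ol{\pi}_{w_a^{-1}}=\ol{\pi}_i\ol{\pi}_{w_{s_i a}^{-1}}$ and $\ol{\theta}_{w_a^{-1}}=\ol{\theta}_i\ol{\theta}_{w_{s_i a}^{-1}}$. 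Combined with the inductive hypothesis applied to $s_i a$, the Demazure and atom identities reduce to
\begin{equation*}
\ol{\pi}_i(\ol{\mf{D}}_{s_i a})=\ol{\mf{D}}_a \qquad\text{and}\qquad \ol{\theta}_i(\ol{\mf{A}}_{s_i a})=\ol{\mf{A}}_a .
\end{equation*}
The Grothendieck identity is then obtained either as the special case where $a$ is an increasing rearrangement of $\lam$ (so $w_a=w_0$ and $\ol{\pi}_{w_0}$ is the full $K$-symmetrization), together with the combinatorial identification $\ol{\mf{D}}_a=\ol{s}_\lam$ for such $a$ — the $K$-analogue of ``key polynomial of an antidominant composition equals a Schur polynomial'' — or directly, by matching $\ol{\pi}_{w_0}(x^\lam)$ with the set-valued Young tableau expansion of $\ol{s}_\lam$.

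\emph{Step 3: the one-step recursion (the main obstacle).} What remains is to show that the algebraically defined operator $\ol{\pi}_i$ — which, unlike $\partial_i$ or $\pi_i$, can raise degree because of its $\beta$-terms — sends the weight generating function of set-valued skyline fillings of shape $s_i a$ to that of shape $a$, and likewise $\ol{\theta}_i$ for the atom model. I would attack this by exhibiting, for each set-valued skyline filling $T$ of shape $s_i a$, an explicit weight- and degree-tracking family of fillings of shape $a$ whose contributions sum to the contribution of $x^{\tn{wt}(T)}$ in $\ol{\pi}_i(x^{\tn{wt}(T)})$: a ``bumping and splitting'' correspondence on the rows indexed $i$ and $i+1$ that specializes at $\beta=0$ to the classical Demazure-crystal move, the new phenomenon being that a cell may carry a set of entries, so a single classical move can break into several, each accounting for one extra $\beta$-monomial produced by $\ol{\pi}_i$. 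Verifying that this correspondence is a bijection onto the legal set-valued skyline fillings of shape $a$, with neither over- nor under-counting, is where the work lies. An alternative — the route taken in \cite{buciumas2020colored} — is to realize $\ol{\pi}_i$ and $\ol{\theta}_i$ as row-transfer operators in a colored five-vertex lattice model whose partition function is by construction the set-valued skyline generating function, turning the recursion into an instance of the Yang--Baxter equation; I would fall back on that if the direct bijection becomes too delicate.
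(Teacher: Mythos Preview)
The paper does not give its own proof of this theorem. Theorem~\ref{thm:K} is stated as a background result from the literature, attributed to Lascoux \cite{lascoux2001transition} and Buciumas--Scrimshaw--Weber \cite{buciumas2020colored}, with the additional remark that the latter two formulas were conjectured by Monical and that their equivalence to one another was established by Monical--Pechenik--Searles. No argument, sketch, or combinatorial definition of $\ol{\mf{D}}_a$ or $\ol{\mf{A}}_a$ is provided in the paper; the author explicitly says the set-valued tableau descriptions ``will not be repeated here.'' So there is nothing in the paper to compare your proposal against.

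For what it is worth, your outline is a plausible strategy and in fact names the actual route taken in the cited sources: the lattice-model/Yang--Baxter argument you mention as a fallback is precisely what \cite{buciumas2020colored} does. Your ``direct bijection'' Step~3, by contrast, is exactly the part that was open between Monical's conjecture and the Buciumas--Scrimshaw--Weber proof; a clean bijective proof of the one-step recursion $\ol{\pi}_i(\ol{\mf{D}}_{s_i a})=\ol{\mf{D}}_a$ at the level of set-valued skyline fillings is not, to my knowledge, in the literature, so you should not expect that step to be routine. If you intend to pursue this line, the honest thing is to cite the lattice-model proof rather than leave the bijection as a promissory note.
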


The latter two formulas were conjectured by Monical in \cite{monical2016set}, and their equivalence to each other was proven by Monical, Pechenik, and Searles in \cite{monical2021polynomials}. It is unknown whether the polynomials $\ol{\mf{D}}_a$ or $\ol{\mf{A}}_a$ actually have a geometric $K$-theoretic meaning or a representation theoretic meaning, but they are known to have nice combinatorial descriptions (which we again will not repeat here) in terms of ``set-valued" tableaux.

Hivert \cite{hivert2000hecke} and Hicks and Niese \cite{hicks2024quasisymmetric} prove a different analogue of Theorem \ref{thm:classic}, which Hicks and Niese describe as a ``fundamental" analogue:
    
\begin{theorem}[Hivert \cite{hivert2000hecke}; Hicks-Niese \cite{hicks2024quasisymmetric}]\label{thm:hivert-hicks-niese}
    If $a$ is a weak composition of $n$, $\tn{flat}(a)$ is the strong composition formed by removing all 0's from $a$, and $w_a$ is a minimal length permutation that turns $a$ into $\tn{flat}(a)$, $$\til{\pi}_{w_0}(x^a) = F_a,\hspace{1cm}\til{\pi}_{w_a^{-1}}(x^{\tn{flat}(a)}) = \mf{F}_a,\hspace{1cm}\til{\theta}_{w_a^{-1}}(x^{\tn{flat}(a)})=\mf{P}_a,$$ where $F_a$ are the fundamental quasisymmetric functions, $\mf{F}_a$ are the fundamental slides from \cite{assaf2017schubert}, and $\mf{P}_a$ are the fundamental particles from \cite{searles2020polynomial}.
\end{theorem}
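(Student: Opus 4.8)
The plan is to work throughout from an explicit description of how Hivert's operators $\til\pi_i$ and $\til\theta_i = \til\pi_i - 1$ act on monomials, combined with the structural facts established by Hivert \cite{hivert2000hecke}: the $\til\pi_i$ satisfy $\til\pi_i^2 = \til\pi_i$ and the braid relations, so $\til\theta_i^2 = -\til\theta_i$, the $\til\theta_i$ also braid, and $\til\pi_w$, $\til\theta_w$ depend only on $w$ and not on the chosen reduced word. The first step is to read off from the definition the expansion of $\til\pi_i(x^c)$ and $\til\theta_i(x^c)$ in terms of only the pair of exponents $(c_i,c_{i+1})$: in one regime the operator fixes $x^c$ (so $\til\theta_i$ annihilates it), and in the complementary regime it adds to $x^c$ precisely the monomials obtained by a prescribed family of ``fundamental'' moves on those two exponents — the same moves built into the combinatorial definitions of $F$, $\mf{F}$, and $\mf{P}$. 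Having this monomial rule in hand, one checks that in every computation below each operator is applied without cancellation, so that the task reduces to identifying which monomials occur.

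For $\til\pi_{w_0}(x^a) = F_a$, I would first note that $\til\pi_{w_0}$ is idempotent (this follows from $\til\pi_i^2 = \til\pi_i$ together with $w_0 s_i < w_0$ for every $i$) and, by Hivert, has image contained in the quasisymmetric polynomials in $x_1,\dots,x_n$. Running the monomial rule down the staircase reduced word $w_0 = (s_1)(s_2 s_1)\cdots(s_{n-1}\cdots s_1)$ shows that each $\til\pi_i$ only introduces monomials of the same degree whose exponent vectors are strictly larger than the current one in the ``mass pushed rightward'' order underlying the fundamental basis, while the coefficient of $x^a$ itself remains $1$ at every stage. Hence $\til\pi_{w_0}(x^a)$ is a quasisymmetric polynomial equal to $x^a$ plus strictly larger terms in that order; since $F_a$ is the unique quasisymmetric polynomial of this shape, the two agree.

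For the slide and particle identities I would induct on the length of $w_a$, in parallel with Demazure's argument behind Theorem \ref{thm:classic}. The base case $w_a = \mathrm{id}$ is immediate, since $\til\pi_{\mathrm{id}}(x^{\tn{flat}(a)}) = x^{\tn{flat}(a)} = \mf{F}_{\tn{flat}(a)}$ (and likewise $= \mf{P}_{\tn{flat}(a)}$), no slide or particle move being applicable to a left-packed composition. The inductive step rests on the crucial lemma — and the real content of the theorem — that
\[ \til\pi_i\,\mf{F}_c = \mf{F}_{s_i c}, \qquad \til\theta_i\,\mf{P}_c = \mf{P}_{s_i c} \qquad \text{whenever } c_i > c_{i+1}, \]
where $s_i c$ swaps the $i$-th and $(i+1)$-th parts of $c$; this is proved by matching the monomial rule for $\til\pi_i$ (resp.\ $\til\theta_i$) applied to a whole polynomial against the definitions of the fundamental slides of \cite{assaf2017schubert} (resp.\ the fundamental particles of \cite{searles2020polynomial}). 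Granting the lemma, one takes a reduced word for $w_a^{-1}$ whose partial products carry out the minimal greedy procedure that moves the nonzero parts of $\tn{flat}(a)$ rightward into their positions in $a$; the minimal-length hypothesis ensures that at each step the relevant composition is strictly decreasing in the two coordinates being acted on, so the lemma applies repeatedly and gives $\til\pi_{w_a^{-1}}(x^{\tn{flat}(a)}) = \mf{F}_a$ and $\til\theta_{w_a^{-1}}(x^{\tn{flat}(a)}) = \mf{P}_a$, with independence of the choice of $w_a$ falling out of the braid relations and this same analysis. I expect the one-step recurrence lemma to be the main obstacle: it demands a careful comparison between the action of Hivert's operators on polynomials and the slide and particle combinatorics, and this — not the surrounding induction, which is classical — is where the genuinely new work lies. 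A secondary point needing care is checking that the minimal-length hypothesis really keeps every intermediate composition inside the hypothesis $c_i > c_{i+1}$ of the lemma, so that in the particle case the relation $\til\theta_i^2 = -\til\theta_i$ never gets the chance to contribute a sign.
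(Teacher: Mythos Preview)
The paper does not itself prove Theorem~\ref{thm:hivert-hicks-niese}; it cites Hivert and Hicks--Niese. The closest thing to a ``paper's proof'' is the argument in \S\ref{sec:pf} for the $K$-analogue, which at $\beta=0$ specializes to a proof of this statement, so that is what I compare against.

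For the slide and particle identities your inductive scheme is exactly the paper's: induct on the length of $w_a$, and reduce to a one-step recurrence computing the effect of a single $\til\pi_i$ (resp.\ $\til\theta_i$) on a slide (resp.\ particle) polynomial. There is, however, a genuine error in how you state that recurrence. You assert $\til\pi_i\,\mf{F}_c = \mf{F}_{s_i c}$ whenever $c_i > c_{i+1}$, but this is false in that generality: for $c = 21$ one has $\mf{F}_{21} = x_1^2 x_2$, which $\til\pi_1$ fixes (both exponents are nonzero, so $\til s_1$ acts trivially), whereas $\mf{F}_{12} = x_1 x_2^2$. The correct hypothesis, and the one the paper uses, is $c_i \ne 0$ and $c_{i+1} = 0$. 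This is not incidental: Hivert's operators are built precisely so that $\til\pi_i$ does nothing unless one of the two exponents vanishes, which is why the ``fundamental'' combinatorics only involves sliding nonzero parts through zeros. Your induction does stay in this regime (each step moves a nonzero part of $\tn{flat}(a)$ into a zero slot), so the argument survives once the lemma is stated correctly; but you should recognize $c_{i+1}=0$ as the actual hypothesis, not treat it as a secondary point to be checked later.

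For the first identity your approach is genuinely different from the paper's. The paper inducts on the number of variables $n$ via the reduced word $w_0(n)=\sigma_1\cdots\sigma_{n-1}\,w_0(n-1)$ and tracks, operator by operator, exactly which sequences are added (Lemma~\ref{lem}). You instead argue structurally: $\til\pi_{w_0}$ is an idempotent with quasisymmetric image (Hivert), and $\til\pi_{w_0}(x^a)$ is $x^a$ plus terms further right in dominance, hence must equal $F_a$ by unitriangularity. Your route is shorter and closer to Hivert's original, but it imports nontrivial facts as black boxes, and the ``only larger monomials, coefficient of $x^a$ stays $1$'' claim needs justification since $\til\pi_i$ applied to a monomial with $c_i=0,\,c_{i+1}>0$ can produce negative coefficients; you would need to argue that your chosen reduced word never puts you in that situation. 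The paper's direct computation is longer but self-contained and is what actually generalizes to the $K$-setting.
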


The polynomials in Theorem \ref{thm:hivert-hicks-niese} have somewhat simpler combinatorial definitions that do not involve tableaux, which we will given in \S \ref{sec:ops}, and it is known that they have representation theoretic meanings but unknown whether they have geometric meanings. The first two formulas in Theorem \ref{thm:hivert-hicks-niese} were proven by Hivert in \cite{hivert2000hecke}. Hicks and Niese observed that the polynomials arising from the second formula were the fundamental slides defined independently by Searles \cite{searles2020polynomial}, and they proved the third formula above.

We prove the following $K$-analogue of Theorem \ref{thm:hivert-hicks-niese}, which can be thought of as both a fundamental and $K$-theoretic analogue of Theorem \ref{thm:classic}:

\begin{theorem}\label{thm:main}
    Using $a$, $\tn{flat}(a)$, and $w_a$ as above and our $K$-analogues of Hivert's divided difference operators defined in \S\ref{sec:new_ops}, $$\p_{w_0}(x^a) = \ol{F}_a,\hspace{1cm}\p_{w_a^{-1}}(x^{\tn{flat}(a)})=\F_a,\hspace{1cm}\thet_{w_a^{-1}}(x^{\tn{flat}(a)})=\P_a,$$ where $\ol{F}_a$ are the multifundamentals from \cite{lam2007combinatorial}, $\F_a$ are the fundamental glides from \cite{pechenik2019decompositions}, and $\P_a$ are the kaons from \cite{monical2021polynomials}.
\end{theorem}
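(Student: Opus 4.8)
The plan is to follow the skeleton of Hivert's and Hicks--Niese's proofs of Theorem~\ref{thm:hivert-hicks-niese}, carrying along the extra $\beta$-bookkeeping that the $K$-theory forces, exactly as the proof of Theorem~\ref{thm:K} deforms that of Theorem~\ref{thm:classic}. The first task is to record, from the definitions in \S\ref{sec:new_ops}, the structural relations satisfied by the new operators: that $\p_i$ obeys the same $0$-Hecke relations as Hivert's $\til\pi_i$ (the quadratic relation $\p_i^2 = \p_i$, the braid relation $\p_i\p_{i+1}\p_i = \p_{i+1}\p_i\p_{i+1}$, and commutation at distance $\ge 2$), just as Lascoux's $\ol\pi_i$ obey the same relations as the classical $\pi_i$; and that $\thet_i = \p_i - 1$, whence $\thet_i^2 = -\thet_i$ and the same braid and commutation relations hold for the $\thet_i$. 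These checks are local, involving only two or three adjacent variables, and should reduce to the corresponding facts for Hivert's operators together with a verification of how the $\beta$-terms interact. Once they are in place, $\p_w$ and $\thet_w$ are well defined, and $\p_{uv} = \p_u\p_v$, $\thet_{uv} = \thet_u\thet_v$ whenever $\ell(uv) = \ell(u) + \ell(v)$.

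The technical core is a closed combinatorial description of the action of a single operator $\p_i$ on an arbitrary monomial $x^v$, as an explicit sum of monomials weighted by powers of $\beta$. This should be the $K$-lift of Hivert's monomial formula for $\til\pi_i$: where $\til\pi_i$ replaces $x^v$ by the sum of the monomials interpolating between $x^v$ and $s_i x^v$, the operator $\p_i$ should in addition contribute ``overshoot'' monomials carrying positive powers of $\beta$, mirroring the way $\ol\pi_i$ extends $\pi_i$. Via $\thet_i = \p_i - 1$ this simultaneously yields the monomial formula for $\thet_i$. Everything downstream runs on this lemma.

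With the monomial formulas in hand, I would prove all three identities by induction on the length of the relevant permutation, peeling off one generator of a reduced word at a time and tracking precisely which monomials survive and with which power of $\beta$, then matching against the known combinatorial expansions: of the multifundamental $\ol{F}_a$ from \cite{lam2007combinatorial} for $\p_{w_0}(x^a)$, of the fundamental glide $\F_a$ from \cite{pechenik2019decompositions} for $\p_{w_a^{-1}}(x^{\tn{flat}(a)})$, and of the kaon $\P_a$ from \cite{monical2021polynomials} for $\thet_{w_a^{-1}}(x^{\tn{flat}(a)})$. The three computations are tightly linked: since $\ell(w_0) = \ell(w_0 w_a) + \ell(w_a^{-1})$ the factorization $\p_{w_0} = \p_{w_0 w_a}\,\p_{w_a^{-1}}$ relates the first identity to the second, and $\thet_i = \p_i - 1$ relates the second to the third, so in practice one expansion does most of the work for the others. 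As cross-checks the outcome must be consistent with the already-known polynomial relations in the literature --- the stabilization of fundamental glides to multifundamentals, and the decomposition $\F_a = \sum_b \P_b$ of a fundamental glide into kaons (the $K$-analogue of the decomposition of a fundamental slide into fundamental particles, itself the analogue of $\mf{D}_a = \sum_b \mf{A}_b$) --- and either of these could alternatively be used to derive one identity from another.

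I expect the main obstacle to be the monomial lemma together with its use in the inductive step: pinning down the correct $K$-deformed action of $\p_i$, and then controlling the cancellation of the $\beta$-power terms across a full reduced word for $w_0$ (resp.\ $w_a^{-1}$) so that what remains is \emph{exactly} the combinatorially-defined expansion of $\ol{F}_a$ (resp.\ $\F_a$, $\P_a$), rather than merely something that agrees with it up to spurious or lower-order-in-$\beta$ terms. The braid-relation verifications and the consistency checks against the known decompositions, while laborious, should be routine once that formula is secured.
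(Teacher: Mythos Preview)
Your plan is essentially the paper's proof: after verifying the $0$-Hecke relations for $\p_i$ and $\thet_i$, the paper computes the explicit monomial action of $\p_i$ (resp.\ $\thet_i$) and inducts on the length of the relevant permutation, matching the resulting terms bijectively against the combinatorial definitions of $\ol{F}_a$, $\F_a$, and $\P_a$. The cancellation you anticipate never arises, because $\p_i$ fixes and $\thet_i$ annihilates any monomial containing both $x_i$ and $x_{i+1}$, so the monomial formula is manifestly positive and the bookkeeping is straightforward; the paper carries out three separate direct inductions rather than invoking the factorization or glide-into-kaon shortcuts you mention, but those alternatives would also work.
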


We define the relevant polynomials in \S\ref{sec:polys} and the relevant divided difference operators in \S\ref{sec:ops}. Then we give examples to illustrate Theorem \ref{thm:main} in \S\ref{sec:ex}, and we give the proof in \S\ref{sec:pf}.

\section{Polynomials}\label{sec:polys}

\subsection{Fundamentals \texorpdfstring{$F_a$}{F} and multifundamentals \texorpdfstring{$\ol{F}_a$}{F}}

A \emph{\tb{\tcb{strong composition}}} is a sequence $a = a_1 a_2\dots a_\ell$ of positive integers. We write $$\tn{set}(a) := \{a_1,a_1+a_2,\dots,a_1+\dots+a_{\ell-1}\}$$ for the set of partial sums of $a$, and $|a|:=a_1+\dots+a_\ell$ for the \emph{\tb{\tcb{size}}} of $a$.

\begin{definition}[Gessel \cite{gessel1984multipartite}]
    The \emph{\tb{\tcb{fundamental quasisymmetric function}}} $F_a(x_1,\dots,x_n)$ is the generating series for all weakly increasing sequences $1\le i_1 \le i_2 \le \dots \le i_{|a|}\le n$ with \emph{strict} increases $i_b < i_{b+1}$ required at the indices $b\in \tn{set}(a)$: $$F_a(x_1,\dots,x_n) := \sum_{\substack{i_1\le \dots \le i_{|a|}, \\ i_b < i_{b+1}\tn{ for }b\in\tn{set}(a)}}x_{i_1}\dots x_{i_{|a|}}.$$
\end{definition}
Note that we are allowing the number of variables $n$ to be chosen independently of $a$.

\begin{definition}[Lam-Pylyavskyy \cite{lam2007combinatorial}]
    The \emph{\tb{\tcb{multifundamental quasisymmetric function}}} $\ol{F}_a$ (introduced as  $\til{L}_a$ in \cite{lam2007combinatorial}) is the generating series for all weakly increasing sequences of \emph{sets} $S_1\le S_2 \le \dots \le S_{|a|}$ with strict increases $S_b < S_{b+1}$ required at indices $b\in \tn{set}(a)$, where $S_i\se\{1,2,\dots,n\}$: $$\ol{F}_a(x_1,\dots,x_n) := \sum_{\substack{S_1\le \dots \le S_{|a|}, \\ S_i\se\{1,2,\dots,n\},\\ S_b<S_{b+1}\tn{ for }b\in\tn{set}(a)}}\beta^{|S_1|+\dots+|S_{|a|}|-|a|}\prod_{i=1}^{|a|}\prod_{j\in S_i} x_j,$$ where the ordering on sets is $$S\le T \iff \max(S) \le \min(T),\hspace{1cm} S < T \iff \max(S) < \min(T).$$ 
\end{definition}

Again, we allow $n$ to be chosen independently of $a$. This definition is typical of combinatorial $K$-analogues, where objects get replaced by corresponding ``set-valued objects," and the $\beta$ variable tracks the ``amount of excess" in the set-valued objects. Setting $\beta = 0$, we recover $\ol{F}_a(x_1,\dots,x_n;0) = F_a(x_1,\dots,x_n)$, which is also a property we generally want for $K$-analogues.

\begin{example}
    For $a = 121$ and $n = 4$, we have $\tn{set}(a)=\{1,3\},$ so we need weakly increasing sequences of 4 subsets of $\{1,2,3,4\}$ with strict increases from index 1 to 2 and 3 to 4. The possible sequences are $$\begin{array}{cccccc}
        1|2|2|3 & 1|2|2|4 & 1|2|3|4 & 1|3|3|4 &1|2|23|4 & 1|23|3|4,
    \end{array}$$
    where the vertical bars represent boundaries between sets. It follows that
    \begin{align*}
        F_{121}(x_1,x_2,x_3,x_4) &= x_1 x_2^2 x_3 + x_1 x_2^2 x_4 + x_1x_2 x_3 x_4 + x_1 x_3^2 x_4, \\
        \ol{F}_{121}(x_1,x_2,x_3,x_4) &= x_1 x_2^2 x_3 + x_1 x_2^2 x_4 + x_1x_2 x_3 x_4 + x_1 x_3^2 x_4 + \beta(x_1 x_2^2 x_3 x_4 + x_1 x_2 x_3^2 x_4).
    \end{align*}
\end{example}

\subsection{Fundamental slides \texorpdfstring{$\mf{F}_a$}{F} and fundamental glides \texorpdfstring{$\F_a$}{F}}

A \emph{\tb{\tcb{weak composition}}} is a finite sequence $a = a_1 a_2\dots a_n$ of \emph{nonnegative} integers. The \emph{\tb{\tcb{flat}}} $\tn{flat}(a)$ is the strong composition formed by deleting all the 0's from $a$. A composition $b$ \emph{\tb{\tcb{refines}}} $a$ if $b$ is formed by splitting some of the parts of $a$ into multiple parts, $b$ \emph{\tb{\tcb{dominates}}} $a$ if $b_1+\dots+b_i\ge a_1+\dots+a_i$ for all $i$.

\begin{definition}[Assaf-Searles \cite{assaf2017schubert}]
    We say $b$ is a \emph{\tb{\tcb{slide}}} of $a$ if $b$ dominates $a$ and $\tn{flat}(b)$ refines $\tn{flat}(a)$. The \emph{\tb{\tcb{fundamental slide polynomial}}} $\mf{F}_a$ is the generating series for slides of $a$: $$\mf{F}_a(x_1,\dots,x_n):= \sum_{\substack{b\tn{ dominates }a, \\ \tn{flat}(b)\tn{ refines }\tn{flat}(a)}}x^b,$$ where for a composition $b = b_1\dots b_\ell$ we write $x^b := x_1^{b_1}\dots x_\ell^{b_\ell}$.

\end{definition}

A \emph{\tb{\tcb{weak komposition}}} is a weak composition where the nonzero parts are colored either black or red. The \emph{\tb{\tcb{excess}}} $\tn{ex}(b)$ of $b$ is the number of red entries.

\begin{definition}[Pechenik-Searles \cite{pechenik2019decompositions}]
For a weak composition $a$ with nonzero entries at indices $i_1<\dots<i_\ell$, a \emph{\tb{\tcb{glide}}} of $a$ is a weak komposition $b$ that can be partitioned into $\ell$ blocks, such that the $j$th block:
\begin{enumerate}
    \item Has a black number for its first nonzero entry.
    \item Forms a weak komposition with (sum of entries) $-$ excess $ = a_{n_j}$.
    \item Has its rightmost entry weakly to the left of position $n_j$.
\end{enumerate}

\begin{definition}[Pechenik-Searles \cite{pechenik2019decompositions}]
    The \emph{\tb{\tcb{fundamental glide polynomial}}} $\F_a$ is the generating series for glides of $a$:
$$\F_a(x_1,\dots,x_n) := \sum_{b\tn{ a glide of }a}\beta^{\tn{ex}(b)}x^b.$$ 
\end{definition}
\end{definition}

The slides of $a$ are just the glides with no red parts, so $\F_a$ specializes to $\mf{F}_a$ when $\beta = 0$.

\begin{example}\label{ex:slide-glide}
    For $a = 021$, the glides of $a$ are listed below, where the vertical bars indicate where the blocks end, and the red entries are also shown bolded with a bar over them:
    $$\begin{array}{ccccccc}
        2|10 & 2|01 & 11|1 & 02|1 & 2|1\boldsymbol{\color{red}\ol{1}} & 2\boldsymbol{\color{red}\ol{1}}|1 & 1\boldsymbol{\color{red}\ol{2}}|1.
    \end{array}$$
    Thus, the slide and glide polynomials are, respectively,
    \begin{align*}
        \mf{F}_{021} &= x_1^2 x_2 + x_1^2 x_3 + x_1 x_2 x_3 + x_2^2 x_3, \\
        \F_{021} &= x_1^2 x_2 + x_1^2 x_3 + x_1 x_2 x_3 + x_2^2 x_3 + \beta(2x_1^2 x_2 x_3 + x_1 x_2^2 x_3).
    \end{align*}
\end{example}

\subsection{Fundamental particles \texorpdfstring{$\mf{P}_a$}{P} and kaons \texorpdfstring{$\P_a$}{P}}

\begin{definition}[Monical-Pechenik-Searles \cite{searles2020polynomial,monical2021polynomials}]
    A glide of $a$ is \emph{\tb{\tcb{mesonic}}} (or a slide is called \emph{\tb{\tcb{fixed}}}, in the terminology of \cite{searles2020polynomial}) if for all $j$, the $j$th block ends exactly at position $n_j$ and has a nonzero number at that position. 
\end{definition}

\begin{definition}[Monical-Pechenik-Searles \cite{searles2020polynomial,monical2021polynomials}]
    The \emph{\tb{\tcb{fundamental particles}}} $\mf{P}_a$ (introduced as $\mf{L}_a$ in \cite{searles2020polynomial}) are the generating series for fixed slides: $$\mf{P}_a(x_1,\dots,x_n) := \sum_{b\tn{ a fixed slide of }a}x^b,$$ and the \emph{\tb{\tcb{kaons}}} are the generating series for mesonic glides: $$\P_a(x_1,\dots,x_n) := \sum_{b\tn{ a mesonic glide of }a}\beta^{\tn{ex}(b)}x^b.$$ 
\end{definition}

Searles shows in \cite{searles2020polynomial} that the slides are always positive sums of particles, and Monical, Pechenik, and Searles show in \cite{monical2021polynomials} that the glides are always positive sums of kaons.

\begin{example}
    With $a = 021$ as in Example \ref{ex:slide-glide}, the mesonic glides are:
    $$\begin{array}{ccccccc}
        11|1 & 02|1 & 2\boldsymbol{\color{red}\ol{1}}|1 & 1\boldsymbol{\color{red}\ol{2}}|1,
    \end{array}$$
    so the particle and kaon are, respectively,
    \begin{align*}
        \mf{P}_{021} &= x_1x_2x_3 + x_2^2x_3, \\
        \P_{021} &= x_1x_2x_3 + x_2^2x_3 + \beta(x_1^2 x_2 x_3 + x_1 x_2^2 x_3).
    \end{align*}
\end{example}

\section{Divided difference operators}\label{sec:ops}

\subsection{Classic divided difference operators}\label{sec:classic_ops}

Let $s_i$ be the involution (and ring isomorphism) on polynomials that swaps the variables $x_i$ and $x_{i+1}$, so $$s_i(x_1^{a_1}\dots x_i^{a_i} x_{i+1}^{a_{i+1}}\dots x_n^{a_n}) := x_1^{a_1}\dots x_i^{a_{i+1}} x_{i+1}^{a_i}\dots x_n^{a_n}.$$ The classic divided difference operators are:
\begin{definition}
    \begin{align*}
    \partial_i &:= \frac{1-s_i}{x_i-x_{i+1}}, \\
    \pi_i &:= \partial_i x_i = \frac{x_i-x_{i+1}s_i}{x_i-x_{i+1}}, \\
    \theta_i &:= \pi_i - 1 = x_{i+1}\partial_i. 
\end{align*}
\end{definition}
These operators satisfy the same braid relations and commutation relations as the adjacent transpositions $\sigma_i = (i \ i+1)$ that generate the symmetric group $\mathfrak{S}_n$, namely,
$$\begin{array}{ccc}
    \partial_i\partial_{i+1}\partial_i = \partial_{i+1}\partial_i\partial_{i+1},
     \ \ & \pi_i \pi_{i+1}\pi_i = \pi_{i+1}\pi_i\pi_{i+1}, \ \ & \theta_i\theta_{i+1}\theta_i = \theta_{i+1}\theta_i\theta_{i+1},
\end{array}$$ and whenever $|i-j|\ge 2$,
$$\begin{array}{ccc}
    \partial_i\partial_j = \partial_j\partial_i, \ \ \ & \pi_i \pi_j = \pi_j\pi_i, \ \ \ & \theta_i\theta_j = \theta_j\theta_i.
\end{array}$$ However, unlike the adjacent transpositions, the squaring relations for these operators are
$$\begin{array}{ccc}
    \partial_i^2 = 0, \ \ \ & \pi_i^2 = \pi_i, \ \ \ & \theta_i^2 = -\theta_i.
\end{array}$$ Thus, $\partial_i$ behaves like a boundary operator and $\pi_i$ like a projection, hence the notation $\partial$ and $\pi$. Lascoux and Sch\"utzenberger call $\pi$ the \emph{\tb{\tcb{convex symmetrizer}}}. This name makes sense because if we assume without loss of generality that $a>b$, then $$\pi_i(x_i^a x_{i+1}^{b}) = \frac{x_i^{a+1}x_{i+1}^b - x_i^bx_{i+1}^{a+1}}{x_i-x_{i+1}} = x_i^{a} x_{i+1}^{b} + x_i^{a-1}x_{i+1}^{b+1}+\dots+x_i^{b}x_{i+1}^{a}$$ is a symmetric polynomial in $a$ and $b$ consisting of all monomials $x_i^c x_{i+1}^d$ where $(c,d)$ is a lattice point on the line segment connecting $(a,b)$ to $(b,a)$. Thus, $\pi_i$ projects the full set polynomials onto the set of polynomials that are symmetric in $x_i$ and $x_{i+1}$, in a way analogous to taking the convex hull of $x_i^a x_{i+1}^b$ and $x_i^b x_{i+1}^a$.

More generally, given any permutation $w\in\mathfrak{S}_n$ and a reduced word $w = \sigma_{i_1}\dots \sigma_{i_\ell}$ for $w$, the operators $\partial_w$, $\pi_w$, and $\theta_w$ are given by
$$\begin{array}{ccc}
   \partial_w := \partial_{i_1}\dots\partial_{i_\ell},  \ \ & \pi_w := \pi_{i_1}\dots\pi_{i_\ell}, \ \ & \theta_w := \theta_{i_1}\dots\theta_{i_\ell}.
\end{array}$$ The fact that these are independent of the choice of reduced word comes from the braid and commutation relations above.

\subsection{Hivert's divided difference operators}\label{sec:Hivert_ops}

Hivert's operator $\til{s}_i$ is the linear operator on polynomials that acts on the monomial $x^a = x_1^{a_1}\dots x_n^{a_n}$ by
$$\til{s}_i(x^a):= \begin{cases}
    s_i(x^a) &\tn{if }a_i=0\text{ or }a_{i+1}=0,\\
    x^a &\tn{else.}
\end{cases}$$
That is, $\til{s}_i$ swaps $x_i$ and $x_{i+1}$ \emph{only if $x_i$ and $x_{i+1}$ do not both appear in the monomial}, and otherwise it keeps the monomial the same. Unlike $s_i$, note that $\til{s}_i$ is not a ring homomorphism, since if $a\ne b$, $$\til{s}_i(x_i^a)\til{s}_i(x_{i+1}^b) = x_{i+1}^a x_i^b \ne x_i^a x_{i+1}^b = \til{s}_i(x_i^a x_{i+1}^b).$$ Hivert defines the following analogues of $\partial_i$, $\pi_i,$ and $\theta_i$:
\begin{definition}[Hivert \cite{hivert2000hecke}]
\begin{align*}
    \til{\partial}_i &:= \frac{1-\til{s}_i}{x_i-x_{i+1}}, \\
    \til{\pi}_i &:= \frac{x_i-x_{i+1}\til{s}_i}{x_i-x_{i+1}}, \\
    \til{\theta}_i &:= \til{\pi}_i-1 = x_{i+1}\til{\partial}_i.
\end{align*}
\end{definition}
\noindent Hivert actually writes $\boldsymbol{\sigma}$ for $\til{s}$, $\boldsymbol{\pi}$ for $\til{\pi}$, and $\overline{\boldsymbol{\pi}}$ for $\til{\theta}$, but we follow the notation of \cite{hicks2024quasisymmetric} instead. 

Hivert's operators $\til{\pi}$ and $\til{\theta}$ satisfy the same braid, commutation, and squaring relations as $\pi$ and $\theta$. Thus, for a general permutation $w\in \mathfrak{S}_n$, we can define $\til{\pi}_w$ and $\til{\theta}_w$ in the same way as for $\pi_w$ and $\theta_w$,  namely, $$\til{\pi}_w := \til{\pi}_{i_1}\dots\til{\pi}_{i_\ell}, \hspace{1cm}\til{\theta}_w := \til{\theta}_{i_1}\dots\til{\theta}_{i_\ell}$$ and the result will be independent of the choice of reduced word $w = \sigma_{i_1}\dots \sigma_{i_\ell}.$

Note that Hivert's definition of $\til{\pi}_i$ is \emph{not} equivalent to $\til{\partial}_i x_i$, since if $a,b\ne 0$ we have $$\til{\pi}_i(x_i^a x_{i+1}^b) = \frac{x_i\cdot x_i^a x_{i+1}^b - x_{i+1}\cdot \til{s}_i(x_i^a x_{i+1}^b)}{x_i-x_{i+1}} = \frac{x_i^{a+1}x_{i+1}^b - x_i^a x_{i+1}^{b+1}}{x_i-x_{i+1}} = x_i^a x_{i+1}^b,$$ while $$\til{\partial}_ix_i(x_i^ax_{i+1}^b) = \frac{x_i^{a+1}x_{i+1}^b - \til{s}_i(x_i^{a+1}x_{i+1}^b)}{x_i-x_{i+1}} = 0.$$

\subsection{New \texorpdfstring{$K$}{K}-analogues of Hivert's divided difference operators}\label{sec:new_ops}

We define the following $K$-analogues of Hivert's divided difference operators:
\begin{definition}
    \begin{align*}
        \ol{\til{\pi}}_i &:= \frac{x_i - x_{i+1}\til{s}_i + \beta x_ix_{i+1}(1-\til{s}_i)}{x_i-x_{i+1}}, \\
        \ol{\til{\theta}}_i &:= \ol{\til{\pi}}_i - 1 = x_{i+1}(1+\beta x_i)\til{\partial}_i.
    \end{align*}
\end{definition}
\noindent Note that these specialize to $\til{\pi}_i$ and $\til{\theta}_i$ at $\beta = 0$, as we would want. Note also that if $a,b\ne 0$ and $f$ is a polynomial not involving $x_i$ or $x_{i+1}$, $$\p_i(x_i^a x_{i+1}^b f) = x_i^a x_{i+1}^bf,\hspace{1cm}\thet_i(x_i^a x_{i+1}^bf) = 0.$$ That is, $\p_i$ fixes monomials involving both $x_i$ and $x_{i+1}$ and $\thet_i$ sends such monomials to 0.

For $w\in\mf{S}_n$, we would like to define $\ol{\til{\pi}}_w$ and $\ol{\til{\theta}}_w$ in the same way as above, namely,
$$\ol{\til{\pi}}_w := \ol{\til{\pi}}_{i_1}\dots\ol{\til{\pi}}_{i_\ell}, \hspace{1cm}\ol{\til{\theta}}_w := \ol{\til{\theta}}_{i_1}\dots\ol{\til{\theta}}_{i_\ell},$$ where $w = \sigma_{i_1}\dots\sigma_{i_\ell}$ is a reduced word for $w$. However, for these definitions to be independent of the choice of reduced word, we need to check that the $\ol{\til{\pi}}_i$'s and $\ol{\til{\theta}}_i$'s satisfy the braid and commutation relations:
\begin{claim}
The operators defined above satisfy
$$\p_i^2 = \p_i, \hspace{1cm} \thet_i^2 = -\thet_i,\hspace{1cm}\ol{\til{\pi}}_i \ol{\til{\pi}}_{i+1}\ol{\til{\pi}}_i = \ol{\til{\pi}}_{i+1}\ol{\til{\pi}}_i\ol{\til{\pi}}_{i+1}, \hspace{1cm} \ol{\til{\theta}}_i \ol{\til{\theta}}_{i+1}\ol{\til{\theta}}_i = \ol{\til{\theta}}_{i+1}\ol{\til{\theta}}_i\ol{\til{\theta}}_{i+1}=0,$$ and for $|i-j|\ge 2$ they satisfy $\ol{\til{\pi}}_i\ol{\til{\pi}}_j = \ol{\til{\pi}}_j\ol{\til{\pi}}_i$ and $\ol{\til{\theta}}_i \ol{\til{\theta}}_j = \ol{\til{\theta}}_j \ol{\til{\theta}}_i$.
\end{claim}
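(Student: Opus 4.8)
Here is the plan I would follow.

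First I would reduce all four relations to finite checks on monomials, after isolating a few structural facts about the operator $\del_i=(1-\til{s}_i)/(x_i-x_{i+1})$. I would note that $\p_i,\thet_i,\del_i$ and $\til{s}_i$ all leave the variables other than $x_i,x_{i+1}$ untouched, so they are $\mb{Z}[\beta][x_k:k\neq i,i+1]$-linear; hence for $|i-j|\ge 2$ the operators $\p_i$ and $\p_j$ (resp.\ $\thet_i$ and $\thet_j$) touch disjoint pairs of variables and commute automatically, and each remaining relation can be tested on a monomial in the two or three relevant variables. Evaluating $\del_i$ on $x_i^a x_{i+1}^b f$ with $f$ free of $x_i,x_{i+1}$ gives $h_{a-1}f$ if $b=0$, $-h_{b-1}f$ if $a=0$, and $0$ if $a,b$ are both positive or both zero, where $h_k:=(x_i^{k+1}-x_{i+1}^{k+1})/(x_i-x_{i+1})$. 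From this I would record: (i) $\del_i$ annihilates every polynomial symmetric in $x_i,x_{i+1}$; (ii) $\del_i(p)$ always lies in the $\mb{Z}[\beta][x_k:k\neq i,i+1]$-span of $\{h_k\}_{k\ge 0}$; and (iii) $\del_i(x_{i+1}h_k f)=-h_k f$, since $x_{i+1}h_k f=\sum_{l=0}^{k}x_i^l x_{i+1}^{k+1-l}f$ and every summand but the last is killed.

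For the squaring relations I would use $\p_i=1+\thet_i$, which turns $\p_i^2=\p_i$ into $\thet_i^2=-\thet_i$, together with $\thet_i=x_{i+1}(1+\beta x_i)\del_i$. Setting $g:=\del_i(p)$, facts (i) and (ii) make $g$ symmetric and a combination of the $h_k$, so $\thet_i^2(p)=x_{i+1}(1+\beta x_i)\del_i(x_{i+1}g+\beta x_i x_{i+1}g)$, where $\del_i$ kills the symmetric term $\beta x_i x_{i+1}g$ by (i) and sends $x_{i+1}g$ to $-g$ by (iii); hence $\thet_i^2(p)=-\thet_i(p)$.

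For the braid relations, expanding $\p_i\p_{i+1}\p_i$ and $\p_{i+1}\p_i\p_{i+1}$ with $\p_j=1+\thet_j$ and simplifying via $\thet_j^2=-\thet_j$ leaves precisely $\thet_i\thet_{i+1}\thet_i-\thet_{i+1}\thet_i\thet_{i+1}$, so it suffices to show both triple products vanish --- which also yields the asserted ``$=0$''. The relation $\thet_{i+1}\thet_i\thet_{i+1}=0$ I would get cheaply: every monomial of $\thet_j(q)$ carries the factor $x_{j+1}$, and $\thet_i$ is linear over $\mb{Z}[\beta][x_{i+2}]$, so every monomial of $\thet_i\thet_{i+1}(p)$ is divisible by $x_{i+1}x_{i+2}$, hence fixed by $\til{s}_{i+1}$ and killed by $\del_{i+1}$, giving $\thet_{i+1}\thet_i\thet_{i+1}(p)=x_{i+2}(1+\beta x_{i+1})\del_{i+1}(\thet_i\thet_{i+1}(p))=0$. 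For $\thet_i\thet_{i+1}\thet_i=0$ it is enough that $\thet_{i+1}\thet_i(p)$ be symmetric in $x_i,x_{i+1}$, for then the final $\del_i$ annihilates it by (i). Testing on $p=x_i^a x_{i+1}^b x_{i+2}^c$: if $a,b\ge 1$ then $\thet_i(p)=0$; otherwise, using (i) for $\thet_i(x_i^m+x_{i+1}^m)=0$ and the $x_{i+2}$-linearity of $\thet_i$, one has $\thet_i(p)=\pm x_{i+2}^c\,\thet_i(x_i^m)$ with $\thet_i(x_i^m)=(1+\beta x_i)\sum_{l=0}^{m-1}x_i^l x_{i+1}^{m-l}$, so if $c\ge 1$ the next $\thet_{i+1}$ kills every term, while if $c=0$ the identity $\sum_{l=0}^{m-1}x_i^l h_{m-1-l}(x_{i+1},x_{i+2})=h_{m-1}(x_i,x_{i+1},x_{i+2})$ gives $\thet_{i+1}\thet_i(x_i^m)=(1+\beta x_i)(1+\beta x_{i+1})\,x_{i+2}\,h_{m-1}(x_i,x_{i+1},x_{i+2})$, manifestly symmetric in $x_i\leftrightarrow x_{i+1}$.

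I expect the relation $\thet_i\thet_{i+1}\thet_i=0$ to be the main obstacle: unlike $\thet_{i+1}\thet_i\thet_{i+1}=0$ it is not forced by a divisibility count, and one genuinely has to compute $\thet_{i+1}\thet_i$ on a single power $x_i^m$ and recognize the three-variable complete homogeneous polynomial. A related pitfall is that (iii) fails if ``span of the $h_k$'' is relaxed to ``symmetric in $x_i,x_{i+1}$'' (e.g.\ $\del_i(x_{i+1}\cdot x_i x_{i+1})=0\neq -x_i x_{i+1}$), so the argument for $\thet_i^2=-\thet_i$ really uses (ii) and not merely the symmetry of $\del_i(p)$.
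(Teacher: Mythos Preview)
Your proof is correct and follows essentially the same route as the paper. Both arguments reduce to monomials in the two or three relevant variables, handle commutation by disjointness of variable pairs, derive $\p_i^2=\p_i$ from $\thet_i^2=-\thet_i$ via $\p_i=1+\thet_i$, obtain $\thet_{i+1}\thet_i\thet_{i+1}=0$ from the observation that $\del_{i+1}$ kills anything divisible by $x_{i+1}x_{i+2}$, prove $\thet_i\thet_{i+1}\thet_i=0$ by showing the intermediate expression is symmetric in $x_i,x_{i+1}$ and hence killed by $\del_i$, and finally deduce the $\p$ braid relation by expanding $(1+\thet_i)(1+\thet_{i+1})(1+\thet_i)$. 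Your use of the $h_k$ notation and the identity $\sum_{l=0}^{m-1}x_i^{\,l}h_{m-1-l}(x_{i+1},x_{i+2})=h_{m-1}(x_i,x_{i+1},x_{i+2})$ packages the symmetry step a bit more cleanly than the paper's term-by-term inspection, but the underlying idea is identical.
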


\begin{proof}
We check the commutation, then squaring, then braid relations:
\begin{itemize}
    \item \tb{Commutation:} The commutation relations hold since if $f$ is a polynomial not involving $x_i$ or $x_{i+1}$, we have $\ol{\til{\pi}}_i(x_i^a x_{i+1}^b f) = \ol{\til{\pi}}_i(x_i^a x_{i+1}^b)f$, so if $|i-j|\ge 2$ and $f$ does not involve $x_i,$ $x_{i+1}$, $x_j,$ or $x_{j+1}$, then $$\ol{\til{\pi}}_i\ol{\til{\pi}}_j(x_i^a x_{i+1}^b x_j^c x_{j+1}^d f) = \ol{\til{\pi}}_i(x_i^a x_{i+1}^b\ol{\til{\pi}}_j(x_j^c x_{j+1}^d)f) = \ol{\til{\pi}}_i(x_i^a x_{i+1}^b)\ol{\til{\pi}}_j(x_j^c x_{j+1}^d)f,$$ and similarly, $$\ol{\til{\pi}}_j\ol{\til{\pi}}_i(x_i^a x_{i+1}^b x_j^c x_{j+1}^d f) = \ol{\til{\pi}}_j(\ol{\til{\pi}}_i(x_i^a x_{i+1}^b)x_j^c x_{j+1}^df) = \ol{\til{\pi}}_i(x_i^a x_{i+1}^b)\ol{\til{\pi}}_j(x_j^c x_{j+1}^d)f.$$ Thus, $\ol{\til{\pi}}_i\ol{\til{\pi}}_j = \ol{\til{\pi}}_j\ol{\til{\pi}}_i$, and the same argument shows that $\ol{\til{\theta}}_i \ol{\til{\theta}}_j = \ol{\til{\theta}}_j \ol{\til{\theta}}_i$.
    \item \tb{Squaring:} For the squaring relations, consider $\thet_i$ first. For monomials involving both $x_i$ and $x_{i+1}$, we have $\thet_i^2 = -\thet_i = 0$. Since applying $\thet_i$ commutes with multiplication by any polynomial not involving $x_i$ or $x_{i+1}$, it suffices to consider monomials of the form $x_i^a$ or $x_{i+1}^a$. Note that \begin{align*}
        \thet_i(x_i^a) &= \frac{x_{i+1}(1+\beta x_i)(x_i^a - x_{i+1}^a)}{x_i - x_{i+1}} \\
        &= x_i^{a-1}x_{i+1} + \dots + x_i x_{i+1}^{a-1} + x_{i+1}^a + \beta(x_i^a x_{i+1} + \dots + x_i x_{i+1}^a), \\
        \thet_i(x_{i+1}^a) &= \frac{x_{i+1}(1+\beta x_i)(x_{i+1}^a - x_i^a)}{x_i - x_{i+1}} = -\thet_i(x_i^a).
    \end{align*}
    When we apply $\thet_i$ again to $\thet_i(x_i^a)$, all terms will become 0 except the $x_{i+1}^a$ term, since all other terms involve both an $x_i$ and an $x_{i+1}$, and similarly for when we apply $\thet_i$ again to $\thet_i(x_{i+1}^a)$. Thus, $$\thet_i^2(x_i^a) = \thet_i(x_{i+1}^a) = -\thet_i(x_i^a),\hspace{1cm}\thet_i^2(x_{i+1}^a) = \thet_i^2(-x_{i+1}^a) = -\thet_i^2(x_{i+1}^a).$$ Then for $\p$, using $\p_i = \thet_i+1$, we get $$\p_i^2 = (\thet_i+1)^2 = \thet_i^2 + 2\thet_i + 1 = -\thet_i + 2\thet_i + 1 = \thet_i+1=\p_i,$$ as claimed.
    
    \item \tb{Braid relations:} For the braid relations, we again consider $\thet$ first. Using $\thet_i = x_{i+1}(1+\beta x_i)\del_i$ and the fact that multiplication by variables besides $x_i$ or $x_{i+1}$ commutes with $\del_i$, we get 
    \begin{align*}
        \thet_{i+1}\thet_i\thet_{i+1} &= x_{i+2}(1+\beta x_{i+1})\del_{i+1} x_{i+1}(1+\beta x_{i})\del_{i} x_{i+2}(1+\beta x_{i+1})\del_{i+1} \\
        &= x_{i+2}(1+\beta x_{i+1})\del_{i+1} x_{i+1}x_{i+2}(1+\beta x_{i})\del_{i} (1+\beta x_{i+1})\del_{i+1} = 0,
    \end{align*}
    since the $\del_{i+1} x_{i+1}x_{i+2}$ part is 0. On the other hand,
    \begin{align*}
        \thet_i \thet_{i+1}\thet_i &= x_{i+1}(1+\beta x_i)\del_{i} x_{i+2}(1+\beta x_{i+1})\del_{i+1} x_{i+1}(1+\beta x_{i})\del_{i}.
    \end{align*}
    For a monomial involving both $x_i$ and $x_{i+1}$, the $\del_i$ on the right automatically sends it to 0. For a monomial involving $x_{i+2}$, the $x_{i+2}$ is unaffected by the rightmost $\del_i$, so the $\del_{i+1}$ sends it to 0. Thus, it remains to consider monomials that do not involve $x_{i+2}$ but involve exactly one of $x_i$ or $x_{i+1}$. Since the rightmost operator is $\del_i$ and $\del_i(x_{i+1}^a) = -\del_i(x_i^a)$, we will have $\thet_i\thet_{i+1}\thet_i(x_{i+1}^a) = -\thet_i\thet_{i+1}\thet_i(x_{i}^a),$ so it suffices to consider $\thet_i\thet_{i+1}\thet_i(x_{i}^a)$. First moving some commuting factors to the left, we get
    \begin{align*}
        \thet_i \thet_{i+1}\thet_i(x_i^a) &= x_{i+1}x_{i+2}(1+\beta x_i)\del_i(1+\beta x_{i+1})(1+\beta x_i)\del_{i+1} x_{i+1}\del_i(x_i^a) \\
        &= x_{i+1}x_{i+2}(1+\beta x_i)\del_i(1+\beta x_{i+1})(1+\beta x_i)\del_{i+1} x_{i+1}\frac{x_i^{a}-x_{i+1}^a}{x_i-x_{i+1}} \\
        &= x_{i+1}x_{i+2}(1+\beta x_i)\del_i(1+\beta x_{i+1})(1+\beta x_i)\del_{i+1} (x_i^{a-1}x_{i+1}+x_i^{a-2}x_{i+1}^2+\dots+x_{i+1}^{a}).
    \end{align*}
    Applying $\del_{i+1}$ to that rightmost sum gives 
    \begin{align*}
        x_i^{a-1} + x_i^{a-2}(x_{i+1}+x_{i+2})+\dots+(x_{i+1}^{a-1}+x_{i+1}^{a-1}x_{i+2}+\dots+x_{i+2}^{a-1}).
    \end{align*}
    The next operator is $\del_i$, which sends all terms involving both $x_i$ and $x_{i+1}$ to 0, so the terms that do not get zeroed out are $$(1+\beta x_i)(x_i^{a-1}+x_i^{a-2}x_{i+2}+\dots+x_{i+2}^{a-1}) + (1+\beta x_{i+1})(x_{i+1}^{a-1}+x_{i+1}^{a-2}x_{i+2}+\dots+x_{i+2}^{a-1}),$$ which will also get sent to 0 by $\del_i$ because it is symmetric in $x_i$ and $x_{i+1}$, and $\del_i$ sends all polynomials that are symmetric in $x_i$ and $x_{i+1}$ to 0. This covers all cases, so it follows that $$\thet_i\thet_{i+1}\thet_i = \thet_{i+1}\thet_i\thet_{i+1}=0.$$

    Finally, for $\p$, we again use the fact that $\p_i = \thet_i+1$ together with the relation $\thet_i\thet_{i+1}\thet_i = \thet_{i+1}\thet_i\thet_{i+1}=0$ above and the relations $\thet_i^2=-\thet_i$ and $\thet_{i+1}^2 = -\thet_{i+1}$ to get
    \begin{align*}
        \p_i \p_{i+1}\p_i &= (\thet_i + 1)(\thet_{i+1}+1)(\thet_i + 1) \\
        &= \thet_i \thet_{i+1}\thet_i + \thet_i\thet_{i+1}+\thet_{i+1}\thet_i+\thet_i^2 + \thet_{i+1}+2\thet_i+1 \\
        &= 0 + \thet_i\thet_{i+1}+\thet_{i+1}\thet_i - \thet_i + \thet_{i+1}+2\thet_i+1 \\
        &= 0 + \thet_i\thet_{i+1}+\thet_{i+1}\thet_i + \thet_{i+1}+\thet_i + 1 \\
        &= \thet_{i+1}\thet_i\thet_{i+1} + \thet_i\thet_{i+1}+\thet_{i+1}\thet_i + \thet_{i+1}^2 + 2\thet_{i+1}+\thet_i + 1 \\
        &= (\thet_{i+1}+1)(\thet_i+1)(\thet_{i+1}+1) \\
        &= \p_{i+1}\p_i\p_{i+1},
    \end{align*}
    as desired.
\end{itemize}

\end{proof}

\section{Examples of Theorem \ref{thm:main}}\label{sec:ex}

\subsection{Example of \texorpdfstring{$\ol{\til{\pi}}_{w_0}(x^a) = \ol{F}_a$}{first formula}}

Let $a = 210$ and $n=3$. Then $\tn{set}(a) = \{2\},$ and the possible weakly increasing sequences of subsets of $\{1,2,3\}$ with a strict increase at index 2 are 
$$\begin{array}{ccccccc}
    1|1|2 & 1|1|3 & 1|2|3 & 2|2|3 & 1|1|23 & 1|12|3 & 12|2|3
\end{array},$$ so the multifundamental polynomial is $$\ol{F}_a(x_1,x_2,x_3) = x_1^2 x_2 + x_1^2 x_3 + x_1 x_2 x_3 + x_2^2 x_3 + \beta(2 x_1^2 x_2 x_3 + x_1 x_2^2 x_3).$$ Since $n=3$, the longest permutation is $w_0 = \sigma_1\sigma_2\sigma_1\in \mf{S}_3$. Thus we get 
\begin{align*}
    \p_{w_0}(x^a) &= \p_1\p_2\p_1(x_1^2 x_2) = \p_1\p_2(x_1^2 x_2) \\
    &= \p_1\frac{x_1^2x_2^2 - x_1^2x_3^2 + \beta x_2x_3(x_1^2 x_2 - x_1^2 x_3)}{x_2 - x_3} \\
    &= \p_1(x_1^2 x_2 + x_1^2 x_3 + \beta x_1^2 x_2x_3) \\
    &= x_1^2 x_2 + \beta x_1^2 x_2 x_3 + \p_1(x_1^2 x_3) \\
    &= x_1^2 x_2 + \beta x_1^2 x_2 x_3 + \frac{x_1^3 x_3 - x_2^3 x_3 + \beta x_1 x_2(x_1^2 x_3 - x_2^2 x_3)}{x_1 - x_2} \\
    &= x_1^2 x_2 + \beta x_1^2 x_2 x_3 + x_1^2 x_3 + x_1x_2x_3 + x_2^2 x_3 + \beta(x_1^2 x_2 x_3 + x_1 x_2^2 x_3) \\
    &= \ol{F}_a(x_1,x_2,x_3),
\end{align*}
making use of the fact that $\p_i$ fixes all monomials involving both $x_i$ and $x_{i+1}$.

\subsection{Example of \texorpdfstring{$\ol{\til{\pi}}_{w_a^{-1}}(x^{\tn{flat}(a)}) = \ol{\mf{F}}_a$}{second formula}}

Let $a = 012$. Then the possible glides of $a$ are
$$\begin{array}{ccccccc}
    1|20 & 1|11 & 1|02 & 01|2 & 1|\boldsymbol{\color{red}{\ol{2}}}1 & 1\boldsymbol{\color{red}{\ol{1}}}|2 & 1|1\boldsymbol{\color{red}{\ol{2}}},
\end{array}$$
so the fundamental glide polynomial is $$\F_a = x_1 x_2^2 + x_1x_2x_3 + x_1x_3^2 + x_2x_3^2 + \beta(x_1x_2^2x_3 + 2x_1x_2x_3^2).$$ We have $\tn{flat}(a) = 120$, so $$\sigma_2\sigma_1(a) = \sigma_2\sigma_1(012) = \sigma_2(102) = 120 = \tn{flat}(a),$$ so $w_a = \sigma_2\sigma_1$ and thus $w_a^{-1} = \sigma_1\sigma_2$. Then we can check that \begin{align*}
    \p_{w_a^{-1}}(x^{\tn{flat}(a)}) &= \p_1\p_2(x_1x_2^2) \\
    &= \p_1\frac{x_1 x_2^3 - x_1 x_3^3 + \beta x_2x_3(x_1 x_2^2 - x_1 x_3^2)}{x_2-x_3} \\
    &= \p_1(x_1 x_2^2 + x_1x_2x_3 + x_1x_3^2 + \beta (x_1 x_2^2 x_3 + x_1x_2 x_3^2)) \\
    &= x_1 x_2^2 + x_1x_2x_3 + \beta (x_1 x_2^2 x_3 + x_1x_2 x_3^2) + \p_1(x_1 x_3^2) \\
    &= x_1 x_2^2 + x_1x_2x_3 + \beta (x_1 x_2^2 x_3 + x_1x_2 x_3^2) + \frac{x_1^2 x_3^2 - x_2^2 x_3^2 + \beta x_1 x_2(x_1x_3^2 - x_2x_3^2)}{x_1 - x_2} \\
    &= x_1 x_2^2 + x_1x_2x_3 + \beta (x_1 x_2^2 x_3 + x_1x_2 x_3^2) + x_1 x_3^2 + x_2 x_3^2 + \beta x_1 x_2 x_3^2 = \F_a,
\end{align*}
where we again used the fact that $\p_i$ fixes monomials involving both $x_i$ and $x_{i+1}$.

\subsection{Example of \texorpdfstring{$\ol{\til{\theta}}_{w_a^{-1}}(x^{\tn{flat}(a)}) = \ol{\mf{P}}_a$}{third formula}}

Let $a = 012$ as above, so $\tn{flat}(a) = 120$ and $w_a^{-1} = \sigma_1\sigma_2$. The only mesonic glides of $a$ are $01|2$ and $1\boldsymbol{\color{red}\ol{1}}|2$, so the kaon is $$\P_a = x_2 x_3^2 + \beta x_1 x_2 x_3^2.$$ Then we get
\begin{align*}
    \thet_{w_a^{-1}}(x^{\tn{flat}(a)}) &= \thet_1\thet_2(x_1 x_2^2) \\
    &= \thet_1 \frac{x_3(1+\beta x_2)(x_1 x_2^2 - x_1 x_3^2)}{x_2 - x_3} \\
    &= \thet_1(x_3(1+\beta x_2)(x_1 x_2 + x_1x_3)) = \thet_1(x_1 x_3^2) \\
    &= \frac{x_2(1+\beta x_1)(x_1 x_3^2 - x_2 x_3^2)}{x_1 - x_2} \\
    &= x_2 x_3^2 + \beta x_1 x_2 x_3^2 = \P_a,
\end{align*}
using the fact that $\thet_i$ sends all monomials involving both $x_i$ and $x_{i+1}$ to 0.

\section{Proof of Theorem \ref{thm:main}}\label{sec:pf}

\subsection{Proof that \texorpdfstring{$\ol{\til{\pi}}_{w_0}(x^a) = \ol{F}_a$}{first formula}}\label{sec:proof1}

We will show in general that $$\p_{w_0(n)}(x^a) = \ol{F}_a(x_1,\dots,x_n)$$ where the number of variables $n$ may be chosen independently of $a$, and $w_0(n)$ is the longest permutation in $\mf{S}_n$. We use induction on the number $n$ of variables $x_1,x_2,\dots,x_n$. If $a = a_1\dots a_\ell$ has length $\ell$, then $\ol{F}_a(x_1,\dots,x_n) = 0$ for $n<\ell$, since it is impossible to have $\ell$ strict increases if we are using subsets of $\{1,2,\dots,n\}$ with $n<\ell$. Thus, we may take $n=\ell$ as the base case. In that case, the only way to have $\ell$ strict increases is if all the sets have size 1, so $\ol{F}_a(x_1,\dots,x_\ell) = x^a$. Thus, we need to show that $\p_{w_0(\ell)}(x^a) = x^a$ for $n=\ell$. 

Since $w_0(\ell)$ is the longest permutation in $\mf{S}_\ell$, any reduced word for $w_0(\ell)$ is a sequence of transpositions $\sigma_i$ with $1\le i\le \ell-1$, so it suffices to show that every $\p_i$ with $1\le i\le \ell-1$ fixes $x^a$. Since $a$ is a strong composition, it has no internal 0's, so in particular $a_i\ne0$ and $a_{i+1}\ne 0$. But $\p_i$ fixes all monomials involving both $x_i$ and $x_{i+1}$, because $\til{s}_i$ acts by the identity on all such monomials, so the action of $\p_i$ on such a monomial is 
\begin{equation}\label{eqn:pi_fixed_things}
    \p_i = \frac{x_i - x_{i+1}\til{s}_i + \beta x_i x_{i+1}(1-\til{s}_i)}{x_i - x_{i+1}} = \frac{x_i-x_{i+1} + \beta x_i x_{i+1}(1-1)}{x_i-x_{i+1}} = 1.
\end{equation}
Thus, all the $\p_i$'s making up $\p_{w_0(\ell)}$ fix $x^a$, so $\p_{w_0(\ell)}$ fixes it as well, completing the base case.

For the inductive step, note that $w_0(n) = \sigma_1\sigma_2\dots\sigma_{n-1}w_0(n-1)$. We may assume from the inductive hypothesis that $\p_{w_0(n-1)}(x^a) = \ol{F}_a(x_1,\dots,x_{n-1})$, so it suffices to show that \begin{equation}\label{eqn:F}
    \ol{F}_a(x_1,\dots,x_n) = \p_1\p_2\dots\p_{n-1}(\ol{F}_a(x_1,\dots,x_{n-1})).
\end{equation}
On the right side, we are starting with the generating series $\ol{F}_a(x_1,\dots,x_{n-1})$ for weakly increasing sequences of subsets $S_1\le \dots \le S_{|a|}$ with strict increases at the indices in $\tn{set}(a)$ and $S_i\se \{1,2,\dots,n-1\}$. We want to show that after applying $\p_1\dots \p_{n-1}$ we gain all terms where the sets $S_i$ may also contain $n$.

\begin{lemma}\label{lem}
    For each $0\le k\le \ell,$ after applying the operators $\p_{n-k}\dots \p_{n-1}$ (which is an empty sequence if $k=0$) we get the generating series for all nondecreasing set sequences counted on the left side of (\ref{eqn:F}) that also satisfy the additional restriction $\max(S_{a_1+\dots+a_{\ell-k}})\le n-k-1.$
\end{lemma}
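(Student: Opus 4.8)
The plan is to prove Lemma~\ref{lem} by induction on $k$. The base case $k=0$ is immediate: the operator string is empty, and the asserted condition $\max(S_{a_1+\dots+a_\ell})=\max(S_{|a|})\le n-1$ just says every set of the chain lies in $\{1,\dots,n-1\}$, since $S_{|a|}$ is the largest set of a weakly increasing chain; so the series named is literally $\ol F_a(x_1,\dots,x_{n-1})$. For the inductive step, assume the claim for some $k$ with $0\le k\le\ell-1$, write $G_k$ for the generating series the lemma attaches to $k$, and set $i:=n-k-1$, $m:=a_1+\dots+a_{\ell-k}$ and $m':=m-a_{\ell-k}=a_1+\dots+a_{\ell-k-1}$; the goal is to show $\p_i(G_k)=G_{k+1}$, where $G_k$ counts the admissible chains with $\max(S_m)\le i$ and $G_{k+1}$ counts those with $\max(S_{m'})\le i-1$.

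First I would localize how narrowly these two series differ. When $1\le k\le\ell-1$ we have $m\in\tn{set}(a)$, so $S_m<S_{m+1}$, and the $k$ blocks of the chain after position $m$ carry $k-1$ further strict increases inside $\{1,\dots,n\}$; this forces $\min(S_{m+1})\le n-k+1$ and hence $\max(S_m)\le n-k=i+1$ (for $k=0$ this bound, $\max(S_m)\le n$, is trivial). So in $G_{k+1}$ the block $\ell-k$ — the $p:=a_{\ell-k}$ sets $S_{m'+1},\dots,S_m$ — lies in $\{1,\dots,i+1\}$, while in $G_k$ it is further capped at level $i$. Likewise, when $m'\ge1$ the strict increase at $m'\in\tn{set}(a)$ together with the cap on $S_{m'+1},\dots,S_m$ already forces $\max(S_{m'})\le i-1$ inside $G_k$. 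Hence $G_k$ and $G_{k+1}$ differ only in that $G_{k+1}$ permits block $\ell-k$ to reach level $i+1$; in particular $G_k\subseteq G_{k+1}$ term by term.

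To evaluate $\p_i$ I would group the chains counted by $G_k$ into $\p_i$-invariant families by their \emph{skeleton} — the chain with every occurrence of $i$ and of $i+1$ deleted from every set (the operator $\p_i$ only moves $i$'s and $i+1$'s around within a fixed skeleton, and creates or deletes pairs $\{i,i+1\}$ via its $\beta$-term). Here $\p_i$ commutes with multiplication by any polynomial in the variables other than $x_i,x_{i+1}$, it fixes every monomial divisible by both $x_i$ and $x_{i+1}$ (by~(\ref{eqn:pi_fixed_things})) and every monomial divisible by neither, and on a monomial $x_i^pf$ with $f$ free of $x_i,x_{i+1}$ it acts by the direct computation
$$\p_i(x_i^pf)=R_pf,\qquad R_p:=x_i^p+x_i^{p-1}x_{i+1}+\cdots+x_{i+1}^p+\beta x_ix_{i+1}\big(x_i^{p-1}+\cdots+x_{i+1}^{p-1}\big),$$
with $\p_i(R_pf)=R_pf$ (which also follows from $\p_i^2=\p_i$), and similarly $\p_i(1+\beta x_i)=(1+\beta x_i)(1+\beta x_{i+1})$. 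Because a chain is weakly increasing, within one skeleton the reinsertions of $i$'s and $i+1$'s are controlled by a couple of monotone thresholds, so the generating series of a skeleton family is a product of explicit local factors of the shapes just described; the point I would verify is that, in each family, $\p_i$ sends the $G_k$ contribution — in which block $\ell-k$ is barred from level $i+1$ — to the $G_{k+1}$ contribution — in which it is allowed — by exactly these identities, while the factors from the earlier blocks and the tail, which are the same in both series, are left untouched. Summing over all skeletons then gives $\p_i(G_k)=G_{k+1}$.

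The main obstacle is exactly this skeleton bookkeeping: making precise the thresholds and local factors attached to a given skeleton; checking that $\p_i$ really does carry the $G_k$ shape to the $G_{k+1}$ shape family by family — in particular how it couples the sets of block $\ell-k$ with those of the adjacent tail block, and that the new $\beta x_ix_{i+1}$-type terms it produces account for exactly the chains in which a set of block $\ell-k$ now contains $\{i,i+1\}$, which is what the extra unit of excess records; and confirming that the earlier blocks and the tail are genuinely inert with nothing over- or under-counted — for which one leans on the two structural facts noted above and on careful tracking of the strict-versus-weak increases at $m'$ and $m$. Once this is in place the induction closes and the lemma follows. To deduce~(\ref{eqn:F}) from it one takes $k=\ell$ (where the index $a_1+\dots+a_{\ell-k}$ becomes the empty sum $0$, so the restriction is vacuous), obtaining $\p_{n-\ell}\cdots\p_{n-1}(\ol F_a(x_1,\dots,x_{n-1}))=\ol F_a(x_1,\dots,x_n)$, and then notes that $\ol F_a$, being quasisymmetric, has the coefficient of $x_j^pf$ equal to that of $x_{j+1}^pf$ whenever $f$ is free of $x_j,x_{j+1}$; hence $\p_j(x_j^pf+x_{j+1}^pf)=x_j^pf+x_{j+1}^pf$ shows $\ol F_a(x_1,\dots,x_n)$ is fixed by each remaining operator $\p_1,\dots,\p_{n-\ell-1}$, so $\p_1\cdots\p_{n-1}(\ol F_a(x_1,\dots,x_{n-1}))=\ol F_a(x_1,\dots,x_n)$, which is~(\ref{eqn:F}).
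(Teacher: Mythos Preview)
Your inductive scheme, base case, and localization of $G_{k+1}$ versus $G_k$ (including the bound $\max(S_m)\le i+1$ forced by the tail and the observation that $G_k$ already satisfies $\max(S_{m'})\le i-1$, so $G_k\subseteq G_{k+1}$) all match the paper's proof. The gap is the one you name yourself: the skeleton bookkeeping is proposed but not carried out, and the claim that each skeleton family's $G_k$-contribution is a product of local factors of the shapes $R_p$, $1+\beta x_i$, etc., on which $\p_i$ acts in the desired way, is left unverified.

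The paper closes this gap without the skeleton abstraction by pushing the tail constraint one step further. Since $\max(S_m)\le i+1$ always, any chain in $G_{k+1}\setminus G_k$ has $\max(S_m)=i+1$ exactly, and then the $k-1$ remaining strict increases after position $m$ force every set in blocks $\ell-k+1,\dots,\ell$ to be a singleton of prescribed value (all $\{i+2\}$'s, then all $\{i+3\}$'s, up through all $\{n\}$'s). Hence each new chain comes from a unique chain in $G_k$, obtained by replacing every $i+1$ in block $\ell-k$ by $i$ (deleting it if $i$ is already present); that $G_k$-chain has monomial $x_i^b f$ with $f$ free of $x_i,x_{i+1}$, and the explicit expansion of $\p_i(x_i^b)$ you wrote down enumerates precisely the original chain together with all new chains obtainable from it. The remaining $G_k$-terms---those whose monomial uses both $x_i$ and $x_{i+1}$, or uses exactly one but sits in an earlier block where swapping $i\leftrightarrow i+1$ yields another chain already in $G_k$---are fixed by $\p_i$ individually or in pairs, as you sketched. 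This rigidity-of-the-tail observation is what replaces your threshold analysis and turns the matching into a direct bijection rather than a factorization over skeletons.
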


\begin{proof}
    We use induction on $k$. For the base case $k=0$, the statement is that $\max(S_{a_1+\dots+a_{\ell}})=\max(S_{|a|})\le n-1$, which is precisely what we are assuming by saying that to start only the variables $x_1,\dots,x_{n-1}$ may be used. Thus, the base case holds.

    For the inductive step, assume the statement holds before applying $\p_{n-k}$ for some $1\le k\le \ell$, so we need to show that it still holds after applying $\p_{n-k}.$ The new terms we are claiming are included after applying $\p_{n-k}$ but not before applying it are the ones that still satisfy the restriction $\max(S_{a_1+\dots+a_{\ell-k}}) \le n-k-1$ but no longer satisfy the restriction $\max(S_{a_1+\dots+a_{\ell-k+1}}) \le n-k$, which means we have $\max(S_{a_1+\dots+a_{\ell-k}}) \le n-k-1$ but $\max(S_{a_1+\dots+a_{\ell-k+1}}) \ge n-k+1$. Note that there are $k-1$ indices in $\tn{set}(a)$ that are greater than or equal to $a_1+\dots + a_{\ell-k+1}$, which means there are $k-1$ strict increases required after $S_{a_1+\dots+a_{\ell-k+1}}$, so as long as all elements used are at most $n$, we must always have $\max(S_{a_1+\dots+a_{\ell-k+1}})\le n-k+1$. The only way it can exactly equal $n-k+1$ is if all sets after $S_{a_1+\dots+a_{\ell-k+1}}$ have size 1 and are as large as possible given the strict increase requirements, so that the final portion of the sequence of sets looks something like $$\dots|\underbrace{\dots\le n-k-1}_{a_{\ell-k}\tn{ sets}}|\underbrace{\dots n-k+1}_{a_{\ell-k+1}\tn{ sets}}|\underbrace{n-k+2|\dots|n-k+2}_{a_{\ell-k+2}\tn{ sets}}|\dots|\underbrace{n-1|\dots|n-1}_{a_{\ell-1}\tn{ sets}}|\underbrace{n|\dots|n}_{a_\ell\tn{ sets}},$$ where the vertical bars represent boundaries between the subsets $S_i$. (Note that the set $S_{a_1+\dots+a_{\ell-k+1}}$ may have size greater than 1 if it contains additional smaller elements besides $n-k+1$.)
    
    For any such term that we hope to get from applying $\p_{n-k}$, there is a unique corresponding term that exists before applying $\p_{n-k}$ obtained by replacing all the $(n-k+1)$'s by $(n-k)$'s so that the $\max(S_{a_1+\dots+a_{\ell-k+1}}) \le n-k$ restriction is satisfied. (If some set contains both $n-k$ and $n-k+1$, we simply delete $n-k+1$ from that set.) Changing the $(n-k+1)$'s into $(n-k)$'s will not cause any of the strict increase conditions to be violated because of the assumption that $a_{\ell-k}\le n-k-1$, since then if the first of the $a_{\ell-k+1}$ sets is equal to $\{n-k+1\}$ and gets changed to $\{n-k\}$, it will still be strictly greater than the final set in the $a_{\ell-k}$ group.
    
    Note that this corresponding term from before applying $\p_{n-k}$ must contain $x_{n-k}$ and cannot contain $x_{n-k+1}$, so let $1\le b \le a_{\ell-k+1}$ be the exponent on $x_{n-k}$. When applying $\p_{n-k}$, all other variables besides $x_{n-k}$ and $x_{n-k+1}$ are treated like constants, so we may ignore them and just consider the action of $\p_{n-k}$ on $x_{n-k}^b$. We get 
    \begin{align*}
        \p_{n-k}(x_{n-k}^b) &= \frac{x_{n-k}^{b+1}-x_{n-k+1}^{b+1} + \beta x_{n-k}x_{n-k+1}(x_{n-k}^b - x_{n-k+1}^b)}{x_{n-k}-x_{n-k+1}} \\
        &= \sum_{c+d = b}x_{n-k}^c x_{n-k+1}^d + \beta x_{n-k}x_{n-k+1}\sum_{c+d = b-1}x_{n-k}^c x_{n-k+1}^d.
    \end{align*}
    The terms we get from the first sum correspond to transformations of the form $$\underbrace{\dots n-k|\dots|n-k|}_{b} \ \ \to \ \ \underbrace{\dots n-k|\dots|n-k}_c|\underbrace{n-k+1|\dots|n-k+1|}_d,$$ where $c+d=b$ and the $b$ relevant sets are the last $b$ sets in the $a_{\ell-k+1}$ group. The terms we get from the second sum correspond to transformations of the form $$\underbrace{\dots n-k|\dots|n-k|}_{b} \ \ \to \ \ \underbrace{\dots n-k|\dots|n-k}_c|n-k,n-k+1|\underbrace{n-k+1|\dots|n-k+1|}_d,$$ where again the $b$ starting sets are at the end of the $a_{\ell-k+1}$ group, but this time $c+d = b-1$ and we pick up an additional $\beta$ because one of the $x_{n-k}$'s turns into both an $x_{n-k}$ and an $x_{n-k+1}$. These are exactly the terms we wanted to gain from applying $\p_{n-k}$.
    
    It remains to check that the other terms that exist after applying $\p_{n-k}$ are precisely the same as the remaining terms that were present before applying $\p_{n-k}$. Such a term must either contain both an $n-k$ and an $n-k+1$, or it could contain just one of $n-k$ or $n-k+1$. In the former case where both $n-k$ and $n-k+1$ are used, we get a monomial containing both $x_{n-k}$ and $x_{n-k+1}$, which is fixed by $\p_{n-k}$ by the calculation in (\ref{eqn:pi_fixed_things}). 
    
    In the latter case where we have just one of $n-k$ or $n-k+1$, it is never possible for an $n-k$ or $n-k+1$ to be in the $a_{\ell-k+2}$ group or later, as we noted above. It is also not possible to have an $n-k+1$ in the $a_{\ell-k+1}$ group before applying $\p_{n-k}$ y the inductive hypothesis, and we have already considered all cases where there is an $n-k$ in the $a_{\ell-k+1}$ group. Thus, the only possibility is that the $n-k$ or $n-k+1$ is in the $a_{\ell-k}$ group or earlier. But in that case, there is a corresponding term that is the same but all the $(n-k)$'s replaced with $(n-k+1)$'s or vice versa, since swapping all the $(n-k)$'s for $(n-k+1)$'s in the $a_{\ell-k}$ group or earlier will not cause any conditions to be violated. Thus, factoring out all the variables except $x_{n-k}$ and $x_{n-k+1}$, we have some pair of terms of the form $x_{n-k}^b + x_{n-k+1}^b$. Since $\til{s}_{n-k}$ swaps $x_{n-k}^b$ with $x_{n-k+1}^b$, it fixes their sum, so $\p_{n-k}$ also fixes their sum by the calculation in (\ref{eqn:pi_fixed_things}). Thus, $\p_{n-k}$ introduces exactly the terms we want and fixes all other terms, completing the proof of Lemma \ref{lem}.
\end{proof}

To finish the proof of (\ref{eqn:F}), it remains to check that the final operators $\p_1\dots\p_{n-\ell-1}$ fix our polynomial, since Lemma \ref{lem} implies that after applying $\p_{n-\ell}\dots \p_{n-1}$ we already have the polynomial we want. For each $1\le i\le n-\ell-1$, terms involving both or neither of $x_i$ and $x_{i+1}$ are fixed by the calculation in (\ref{eqn:pi_fixed_things}). For each term involving just one of $x_i$ or $x_{i+1}$, there is always a corresponding term involving just the other, since we can swap all the $x_i$'s for $x_{i+1}$'s or vice versa without violating any conditions. Then after factoring out the other variables we have a sum of two terms $x_i^b + x_{i+1}^b$ such that the sum is fixed by $\til{s}_i$, so the sum is also fixed by $\p_i$, as needed. By induction, this completes the proof that $\p_{w_0}(x^a) = \ol{F}_a$. \qed

\subsection{Proof that \texorpdfstring{$\ol{\til{\pi}}_{w_a^{-1}}(x^{\tn{flat}(a)}) = \ol{\mf{F}}_a$}{second formula}}

We use induction on the length of $w_a$. For the base case, if $w_a=\tn{id}$ has length 0, then $a = \tn{flat}(a)$ and it is not possible to move any entries of $a$ to the left, so $\F_a = x^a = \p_{w_a^{-1}}(x^a) = \p_{w_a^{-1}}(x^{\tn{flat}(a)}).$

For the inductive step, note that it is always possible to turn $\tn{flat}(a)$ into $a$ by a sequence of adjacent transpositions that swap a nonzero number with a 0 immediately to its right. (For instance, we can use these transposition to first move the rightmost nonzero part from its position in $\tn{flat}(a)$ to its position in $a$, then move the next-to-rightmost nonzero part to its position in $a$, and so on.)

Thus, we can let $a = \sigma_i(a'),$ where $\sigma_i$ is such a transposition that swaps a nonzero entry $a_i'$ with the entry $a'_{i+1}=0$ to give $a_i=0$, $a_{i+1}=a_i'$, and $a_j=a'_j$ for $j\ne i,i+1$. Then $w_a^{-1} = \sigma_iw_{a'}^{-1}$ and $\tn{flat}(a) = \tn{flat}(a')$, so we may assume by induction that $\p_{w_{a'}^{-1}}(x^{\tn{flat}(a)}) = \F_{a'},$ and it suffices to show that $\p_i(\F_{a'})=\F_a.$

All glides of $a'$ are also glides of $a$, so $\F_a$ contains all the same terms as $\F_{a'}$, plus additional terms for the glides $b$ of $a$ that are not glides of $a'$. If $a_{i+1}$ is the $j$th nonzero entry of $a$, then those additional terms correspond precisely to the glides $b$ such that the $j$th block of $b$ ends at position $i+1$ (the rightmost position at which it is allowed to end) and $b_{i+1}\ne0$. Any such glide $b$ can be uniquely obtained from a glide $b'$ of $a'$ with $b'_i\ne 0$ and $b'_{i+1}=0$ by a local move of one of the following forms:
\begin{align*}
    &b'_i 0 \to b_ib_{i+1} \ \tn{ or } \ \boldsymbol{\color{red}\ol{b'}_i}0 \to \boldsymbol{\color{red}\ol{b}_i}b_{i+1} &\tn{ with } \ b_i + b_{i+1} = b'_i, \\
    &b'_i 0 \to b_i\boldsymbol{\color{red}\ol{b}_{i+1}} \ \tn{ or } \ \boldsymbol{\color{red}\ol{b'}_i}0 \to \boldsymbol{\color{red}\ol{b}_i}\boldsymbol{\color{red}\ol{b}_{i+1}} \ &\tn{ with } \ b_i + b_{i+1} = b'_i+1, \tn{ and } b_i, b_{i+1}\ne 0
\end{align*}
Thus, we need to show that applying $\p_i$ to $\F_{a'}$ keeps all terms that were there already and adds additional terms corresponding precisely to glides that can be obtained from the moves above.

As noted in (\ref{eqn:pi_fixed_things}), $\p_i$ fixes all monomials involving both or neither of $x_i$ and $x_{i+1}$. This is what we want because for any such $x^{b'}$ involving both or neither of $x_i$ and $x_{i+1}$, $b'$ is a glide of $a'$ if and only if it is a glide of $a$, so the monomial $x^{b'}$ should occur in both or neither of $\F_{a'}$ and $\F_a$. Thus, it remains to consider the action of $\p_i$ on monomials $x^{b'}$ where exactly one of $b'_i$ and $b'_{i+1}$ is nonzero. 

For a glide $b'$ of $a'$ with $b'_i = 0$ and $b'_{i+1}\ne 0$, it must be the case that swapping $b_i'$ and $b'_{i+1}$ also gives a glide of $a'$, so we actually have a pair of monomials $x^{b'}$ and $s_i(x^{b'})=\til{s}_i(x^{b'})$ that both occur in $\F_{a'}$. Then $\til{s}_i$ swaps those two monomials and hence fixes their sum, so $\p_i$ fixes their sum by (\ref{eqn:pi_fixed_things}). That is what we need, because if those two monomials both correspond to glides of $a'$, then they also correspond to glides of $a$, so they should both occur in $\F_{a'}$.

The remaining glides $b'$ of $a'$ to consider are ones with $b'_i\ne 0$ and $b'_{i+1}=0$, such that applying $\sigma_i$ does not give another glide of $a'$, so the monomial $x^{b'}$ occurs in $\F_{a'}$ but $s_i(x^{b'})$ does not. Then we get
\begin{equation}\label{eqn:pi(x^b)}
    \p_i(x_i^{b_i'}) = \frac{x_i^{b_i'+1}-x_{i+1}^{b_i'} + \beta x_i x_{i+1}(x_i^{b_i'} - x_{i+1}^{b_i'})}{x_i-x_{i+1}} = \sum_{b_i + b_{i+1} = b_i'} x_i^{b_i}x_{i+1}^{b_{i+1}} + \beta \sum_{\substack{b_i + b_{i+1} = b_i' + 1, \\ b_i,b_{i+1}\ge 1}} x_i^{b_i} x_{i+1}^{b_{i+1}}.
\end{equation}
These terms correspond precisely to the glides of $a$ obtained from $b'$ by the moves listed above, as needed. \qed

\subsection{Proof that \texorpdfstring{$\ol{\til{\theta}}_{w_a^{-1}}(x^{\tn{flat}(a)}) = \ol{\mf{P}}_a$}{third formula}}

We use induction on the length of $a$ with the same setup as before. The base case holds because if $w_a = \tn{id}$, then $\P_a = x^a = \thet_{w_a^{-1}}(x^a) = \thet_{w_a^{-1}}(x^{\tn{flat}(a)}).$ Now assume $a = \sigma_i(a')$ with $w_a^{-1} = \sigma_i w_{a'}^{-1}$ such that $\sigma_i$ swaps $a'_i\ne 0$ with $a'_{i+1}=0$. We can assume by induction that $\P_{a'} = \thet_{w_{a'}^{-1}}(x^{\tn{flat}(a')}) = \thet_{w_{a'}^{-1}}(x^{\tn{flat}(a)}),$ so it suffices to show that $\P_a = \thet_i(\P_{a'}).$

Since $\P_{a'}$ is the generating series for mesonic glides of $a'$ and $a'_i\ne 0$, it follows from mesonic glide requirement that each block of $b'$ end precisely at the position of the corresponding nonzero entry of $a'$ and have a nonzero entry in that position that all terms in $\P_{a'}$ have a nonzero exponent on $x_i$. similarly, all terms of $\P_a$ should have a nonzero exponent on $x_{i+1}$. Furthermore, unlike for $\F_{a'}$ and $\F_a$, mesonic glides of $a'$ will never also be mesonic glides of $a$, but each mesonic glide of $a$ can be obtained from a mesonic glide of $a'$ by one of the following local moves:
\begin{align*}
    &b'_i 0 \to b_ib_{i+1} \ \tn{ or } \ \boldsymbol{\color{red}\ol{b'}_i}0 \to \boldsymbol{\color{red}\ol{b}_i}b_{i+1} &\tn{ with } \ b_i + b_{i+1} = b'_i\tn{ and }b_{i+1}\ne 0, \\
    &b'_i 0 \to b_i\boldsymbol{\color{red}\ol{b}_{i+1}} \ \tn{ or } \ \boldsymbol{\color{red}\ol{b'}_i}0 \to \boldsymbol{\color{red}\ol{b}_i}\boldsymbol{\color{red}\ol{b}_{i+1}} \ &\tn{ with } \ b_i + b_{i+1} = b'_i+1, \tn{ and } b_i, b_{i+1}\ne 0.
\end{align*}

Mesonic glides $b'$ of $a'$ with $b_i'\ne0$ and $b_{i+1}'\ne 0$ cannot be turned into a mesonic glide of $a$ by one of these moves, since $b'_{i+1}$ belongs to the next block of $b'$ corresponding to the next nonzero entry of $a'$, so the $x^{b'}$ terms with $b'_i\ne0$ and $b'_{i+1}\ne 0$ should go away when we apply $\thet_i$ to turn $\P_{a'}$ to $\P_a$. To check that this indeed happens, note that all such monomials involving both $x_i$ and $x_{i+1}$ are fixed by $\til{s}_i$ and so are sent to 0 by $\del_i$, hence they are also sent to 0 by $\thet_i = x_{i+1}(1+\beta x_i)\del_i$.

For mesonic glides $b'$ of $a'$ with $b_i'\ne 0$ and $b'_{i+1}=0$, we can factor out the part not involving $x_i$ and then find the action of $\thet_i$ on $x_i^{b_i'}$ by simply subtracting the $x^{b_i'}$ term on the right side of (\ref{eqn:pi(x^b)}) to get $$\thet_i(x_i^{b_i'}) = (\p_i-1)(x_i^{b_i'}) = \sum_{\substack{b_i + b_{i+1} = b_i',\\ b_{i+1}\ge 1}} x_i^{b_i}x_{i+1}^{b_{i+1}} + \beta \sum_{\substack{b_i + b_{i+1} = b_i' + 1, \\ b_i,b_{i+1}\ge 1}} x_i^{b_i} x_{i+1}^{b_{i+1}}.$$ These terms correspond precisely to glides that can be obtained from $b'$ by the moves above, since the only difference from the $\p_i$ case is the requirement that $b_{i+1}\ne 0$ for the moves on the first line. Thus, applying $\thet_i$ to $\P_{a'}$ gives exactly the terms of $\P_a$, as needed. \qed

\section*{Acknowledgments}

I'm thankful to Oliver Pechenik for suggesting this project.

\printbibliography

@unpublished{hicks2024quasisymmetric,
  title={Quasisymmetric divided difference operators and polynomial bases},
  author={Hicks, Angela and Niese, Elizabeth},
  url={https://arxiv.org/abs/2406.02420},
  year={2024}
}

@article{hivert2000hecke,
  title={Hecke algebras, difference operators, and quasi-symmetric functions},
  author={Hivert, Florent},
  journal={Advances in Mathematics},
  volume={155},
  number={2},
  pages={181--238},
  year={2000},
  publisher={Elsevier},
  url={https://doi.org/10.1006/aima.1999.1901}
}

@article{searles2020polynomial,
  title={Polynomial bases: positivity and Schur multiplication},
  author={Searles, Dominic},
  journal={Transactions of the American Mathematical Society},
  volume={373},
  number={2},
  pages={819--847},
  year={2020},
  url={https://www.ams.org/journals/tran/2020-373-02/S0002-9947-2019-07670-5/}
}

@article{monical2021polynomials,
  title={Polynomials from Combinatorial $K$-theory},
  author={Monical, Cara and Pechenik, Oliver and Searles, Dominic},
  journal={Canadian Journal of Mathematics},
  volume={73},
  number={1},
  pages={29--62},
  year={2021},
  publisher={Canadian Mathematical Society},
  url={https://arxiv.org/abs/1806.03802}
}

@article{pechenik2019decompositions,
  title={Decompositions of Grothendieck polynomials},
  author={Pechenik, Oliver and Searles, Dominic},
  journal={International Mathematics Research Notices},
  volume={2019},
  number={10},
  pages={3214--3241},
  year={2019},
  publisher={Oxford University Press},
  url={https://doi.org/10.1093/imrn/rnx207}
}

@article{assaf2017schubert,
  title={Schubert polynomials, slide polynomials, Stanley symmetric functions and quasi-Yamanouchi pipe dreams},
  author={Assaf, Sami and Searles, Dominic},
  journal={Advances in Mathematics},
  volume={306},
  pages={89--122},
  year={2017},
  publisher={Elsevier},
  url={https://doi.org/10.1016/j.aim.2016.10.015}
}

@article{lam2007combinatorial,
  title={Combinatorial Hopf algebras and K-homology of Grassmanians},
  author={Lam, Thomas and Pylyavskyy, Pavlo},
  journal={International Mathematics Research Notices},
  volume={2007},
  number={9},
  pages={rnm125--rnm125},
  year={2007},
  publisher={OUP},
  url={https://doi.org/10.1093/imrn/rnm125}
}

@article{demazure1974nouvelle,
  title={Une nouvelle formule des caracteres},
  author={Demazure, Michel},
  journal={Bulletin des Sciences Math\'ematiques},
  volume={2},
  number={98},
  pages={163--172},
  year={1974}
}

@article{gessel1984multipartite,
  title={Multipartite $P$-partitions and inner products of skew Schur functions},
  author={Gessel, Ira M},
  journal={Contemporary Mathematics},
  volume={34},
  pages={289--317},
  year={1984},
  publisher={American Mathematical Society},
  url={https://www.ams.org/books/conm/034/}
}

@incollection{lascoux2001transition,
  title={Transition on Grothendieck polynomials},
  author={Lascoux, Alain},
  booktitle={Physics and combinatorics},
  pages={164--179},
  year={2001},
  publisher={World Scientific},
  url={https://doi.org/10.1142/4719}
}

@article{buciumas2020colored,
  title={Colored five-vertex models and Lascoux polynomials and atoms},
  author={Buciumas, Valentin and Scrimshaw, Travis and Weber, Katherine},
  journal={Journal of the London Mathematical Society},
  volume={102},
  number={3},
  pages={1047--1066},
  year={2020},
  publisher={Wiley Online Library},
  url={https://doi.org/10.1112/jlms.12347}
}

@unpublished{monical2016set,
  title={Set-valued skyline fillings},
  author={Monical, Cara},
  url={https://arxiv.org/abs/1611.08777},
  year={2016}
}

\end{document}